\documentclass[11pt,a4paper]{amsart}
\usepackage[utf8]{inputenc}
\usepackage{amsmath,amssymb,amsthm,dsfont,mathtools,wasysym}
\usepackage{color}
\usepackage[active]{srcltx}
\usepackage[top=3.25cm,bottom=2.75cm,left=3.5cm,right=3.5cm]{geometry}
\usepackage[colorlinks=true, linktocpage=true, linkcolor=red!70!black, citecolor=green!50!black, urlcolor=black]{hyperref}
\usepackage{mathrsfs}
\usepackage{needspace}
\usepackage{enumitem}
\usepackage{enumitem}
\setenumerate{label={\rm (\alph{*})}}
\usepackage{trfsigns}
\usepackage{upref}
\usepackage[font=small]{caption}
\usepackage{mathtools}
\usepackage{todonotes}
\usepackage{bbm}
\usepackage{esint}

\usepackage{tikz-cd}
\usetikzlibrary{decorations.pathreplacing}

\makeatletter
\DeclareRobustCommand\widecheck[1]{{\mathpalette\@widecheck{#1}}}
\def\@widecheck#1#2{%
	\setbox\z@\hbox{\m@th$#1#2$}%
	\setbox\tw@\hbox{\m@th$#1%
		\widehat{%
			\vrule\@width\z@\@height\ht\z@
			\vrule\@height\z@\@width\wd\z@}$}%
	\dp\tw@-\ht\z@
	\@tempdima\ht\z@ \advance\@tempdima2\ht\tw@ \divide\@tempdima\thr@@
	\setbox\tw@\hbox{%
		\raise\@tempdima\hbox{\scalebox{1}[-1]{\lower\@tempdima\box
				\tw@}}}%
	{\ooalign{\box\tw@ \cr \box\z@}}}
\makeatother

\DeclareMathOperator{\sgn}{sgn}
\newcommand{\dd}{\,\mathrm{d}}

\newcommand{\ball}{\mathrm{B}}
\newcommand{\bv}{\mathrm{BV}}

\newcommand{\locc}{\mathrm{loc}}
\newcommand{\lebe}{\mathrm{L}}

\newcommand{\dif}{\mathrm{d}}

\newcommand\del\partial
\newcommand\eps\varepsilon
\newcommand\g\gamma
\newcommand\G\Gamma
\newcommand{\hold}{\mathrm{C}}

\newcommand\N{\mathbb{N}}
\newcommand\qq\qquad
\newcommand\R{\mathbb{R}}
\newcommand\spt{{\rm spt}}
\newcommand\vp\varphi
\newcommand\W{{\rm W}}
\newcommand{\xint}[3]{{\setbox0=\hbox{$#1{#2#3}{\int}$}
		\vcenter{\hbox{$#2#3$}}\kern-.5\wd0}}

\newcommand{\sobo}{\W}

\newcommand{\dashint}{\fint}

\newcommand{\homosobo}{{\dot{\sobo}}}
\newcommand{\cube}{\mathrm{Q}}

\numberwithin{equation}{section}

\newtagform{boldbrackets}{\hspace{8ex}\bf(}{\bf)}
\newtagform{standardbrackets}{)}{)}



\allowdisplaybreaks[3]

\newtheorem{theorem}{Theorem}[section]

\newtheorem{lemma}[theorem]{Lemma}

\theoremstyle{remark}
\newtheorem{remark}[theorem]{Remark}

\makeatletter
\newcommand{\setword}[2]{%
  \phantomsection
  #1\def\@currentlabel{\unexpanded{#1}}\label{#2}%
}
\makeatother

\title[Sharp trace spaces for intersections]{Traces of Sobolev functions \\ and higher integrability}
\begin{document}

\author[R. Denk]{Robert Denk}
\author[F. Gmeineder]{Franz Gmeineder}
\author[P. Stephan]{Paul Stephan}

\address{Authors' address: Fachbereich Mathematik und Statistik, Universit\"at Konstanz, Universit\"atsstr.\ 10, 78464 Konstanz, Germany.}
\email{robert.denk@uni-konstanz.de}
\email{franz.gmeineder@uni-konstanz.de}
\email{paul.stephan@uni-konstanz.de}
\subjclass{42B25, 46B70, 46E35}
\keywords{Sobolev spaces, traces, interpolation}
\date{\today}

\maketitle

\begin{center}
\emph{On the occasion of Hans Triebel's 90th birthday, \\ a giant of function spaces}
\end{center}

\begin{abstract}
We give a sharp characterization of how additional integrability in the interior improves the integrability of boundary traces of $\sobo^{1,p}$-Sobolev functions. The optimality of our results relies on a novel nonlinear extension or lifting operator. 
\end{abstract}

\setcounter{tocdepth}{1}
\tableofcontents

\section{Introduction}
It is a classical fact in the theory of weakly differentiable functions that, on sufficiently regular sets, one may assign boundary values to Sobolev functions. More precisely, let $n\geq 2$ and $1\leq p \leq\infty$. Then, for every open and bounded set $\Omega\subset\R^{n}$ with Lipschitz boundary, there exists a bounded linear and surjective   \emph{trace operator}  $\gamma_{\partial\Omega}\colon\sobo^{1,p}(\Omega)\to Y(\partial\Omega)$ such that $u|_{\partial\Omega}=\gamma_{\partial\Omega}u$ holds for every $u\in\hold(\overline{\Omega})\cap\sobo^{1,p}(\Omega)$. Here, we have set   
\begin{align}\label{eq:tracetarget} 
Y(\partial\Omega) \coloneqq 
\begin{cases}
\lebe^{1}(\partial\Omega)&\;\text{if}\;p=1,\\ 
{\sobo}{^{1-1/p,p}}(\partial\Omega)&\;\text{if}\;1<p<\infty,\\ 
\mathrm{Lip}_{b}(\partial\Omega)&\;\text{if}\;p=\infty.
\end{cases}
\end{align}
For the reader's convenience, the definition of the fractional Sobolev spaces $\sobo^{1-1/p,p}$ is recalled in Section \ref{sec:notation} below. The surjectivity of $\gamma_{\partial\Omega}$ identifies $Y(\partial\Omega)$ as the optimal space to which the boundary values belong, and we thus call $Y(\partial\Omega)$ the \emph{trace space} of $\sobo^{1,p}(\Omega)$. The mapping properties of $\gamma_{\partial\Omega}$ with the spaces $Y(\partial\Omega)$ from \eqref{eq:tracetarget}  are classical, and we refer the reader to, e.g.,  \cite{Leoni,Triebel} for more detail. 

However, refining this setting  in a natural way, much less is known on \emph{the sharp integrability improvements}  when the underlying functions a priori belong to some better $\lebe^{q}(\Omega)$-space than the generic one given by Sobolev's embedding theorem. Hence, in the present paper, we are interested in the basic question of how higher integrability \emph{inside}  $\Omega$ improves the properties of traces of functions \emph{along}  $\partial\Omega$. 

\subsection{Main result} To be precise, let $1\leq p<n$ and $p^{*}<q\leq\infty$, where $p^{*}\coloneqq\frac{np}{n-p}$ is the Sobolev conjugate exponent of $p$. Our objective is to determine the sharp trace space $\gamma_{\partial\Omega}((\sobo^{1,p}\cap\lebe^{q})(\Omega))$; here and in what follows, we abbreviate $(X\cap Y)(\Omega)\coloneqq X(\Omega)\cap Y(\Omega)$ for two function spaces $X(\Omega)$ and $Y(\Omega)$ on $\Omega$. We note that, by the usual Sobolev embedding, $\sobo^{1,p}(\Omega)\hookrightarrow{\lebe}{^{\widetilde{p}}}(\Omega)$ if and only if $\widetilde{p}\leq p^{*}$, and so $(\sobo^{1,p}\cap\lebe^{q})(\Omega)\subsetneq \sobo^{1,p}(\Omega)$ provided that $q>p^{*}$.  Moreover, it is clear that intersecting with $\lebe^{q}$ cannot improve the smoothness of traces beyond $s=1-\frac{1}{p}$. In this regard, we prove that  $\gamma_{\partial\Omega}((\sobo^{1,p}\cap\lebe^{q})(\Omega))$ is precisely the intersection of the classical fractional Sobolev space $\sobo^{1-1/p,p}(\partial\Omega)$ and a Lebesgue space $\lebe^r(\partial \Omega)$ with a specific exponent $r$ depending on $p$ and $q$. In our main result, Theorem \ref{thm:main}, we give a version that also covers the case of homogeneous spaces on half-spaces; we refer the reader to Section \ref{sec:notation} for their precise definition.
\begin{theorem}[Trace space of $\sobo^{1,p}\cap\lebe^{q}$]\label{thm:main}
Let $1 < p < n$ and $p^{*}<q\leq \infty$. Then the following hold: 
\begin{enumerate}
\item\label{item:homogeneous} Denoting the upper half-space by $
\R_{+}^{n}\coloneqq \{(x',x_{n})\colon\;x'\in\R^{n-1},\;x_{n}>0\}$ and identifying $\R^{n-1}\times\{0\}$ with $\R^{n-1}$, we have  
\begin{align}\label{eq:inc1}
    \gamma_{\R^{n-1}}((\homosobo{^{1,p}}\cap\lebe^{q})(\R_{+}^{n}))= ({\homosobo}{^{1-\frac{1}{p}, p}} \cap \lebe^{r})(\R^{n-1}),
\end{align} 
where the exponent $r$ is determined by the scaling relation
\begin{align}\label{eq:rpqconstellation}
r = 1 + q \left( 1 - \frac{1}{p} \right).
\end{align}
In particular, there exists a \emph{continuous extension operator} 
\begin{align*}
\mathrm{E}\colon(\homosobo{^{1-\frac{1}{p},p}}\cap\lebe^{r})(\R^{n-1})\to(\homosobo{^{1,p}}\cap\lebe^{q})(\R_{+}^{n})
\end{align*}
such that $\gamma_{\R^{n-1}}\circ\mathrm{E}=\mathrm{id}$ on $(\homosobo{^{1-1/p,p}}\cap\lebe^{r})(\R^{n-1})$. 
\item\label{item:inhomogeneous} Let $\Omega\subset\R^{n}$ be open and bounded with Lipschitz boundary $\partial\Omega$. Then we have 
\begin{align}\label{eq:inc2}
    \gamma_{\partial\Omega}((\sobo{^{1,p}}\cap\lebe^{q})(\Omega))= ({\sobo}{^{1-\frac{1}{p}, p}} \cap \lebe^{r})(\partial\Omega),
\end{align} 
where the exponent $r$ is determined by the scaling relation \eqref{eq:rpqconstellation}. In particular, there exists a \emph{continuous extension operator} 
\begin{align*}
\mathrm{E}\colon(\sobo{^{1-\frac{1}{p},p}}\cap\lebe^{r})(\partial\Omega)\to(\sobo{^{1,p}}\cap\lebe^{q})(\Omega)
\end{align*}
such that $\gamma_{\partial\Omega}\circ\mathrm{E}=\mathrm{id}$ on $(\sobo{^{1-1/p,p}}\cap\lebe^{r})(\partial\Omega)$. 
\end{enumerate}
In \ref{item:homogeneous} and \ref{item:inhomogeneous}, $\gamma_{\R^{n-1}}$ and $\gamma_{\partial\Omega}$ denote the usual trace operators on $\homosobo^{1,p}(\R_{+}^{n})$ or $\sobo^{1,p}(\Omega)$, respectively. 
\end{theorem}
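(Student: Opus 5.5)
The proof splits into two inclusions: the trace maps into $\lebe^{r}$ (the \emph{necessity} part), and every admissible boundary datum is attained (the \emph{sufficiency} part, via the extension operator $\mathrm{E}$). We first settle the half-space case \ref{item:homogeneous} and then pass to \ref{item:inhomogeneous} by localization.

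\emph{Trace inclusion.} Let $u\in \hold_c^\infty(\overline{\R^n_+})$ and write $\Phi(t)=|t|^{r}$ with $r=1+q(1-\frac1p)$. By the fundamental theorem of calculus in $x_n$ and H\"older's inequality,
\begin{align*}
\int_{\R^{n-1}}|u(x',0)|^{r}\dd x'\le r\int_{\R^n_+}|u|^{r-1}|\partial_n u|\dd x\le r\,\|u\|_{\lebe^{(r-1)p'}(\R^n_+)}^{r-1}\|\nabla u\|_{\lebe^{p}(\R^n_+)}.
\end{align*}
Since $(r-1)p'=q$, this gives $\|\gamma u\|_{\lebe^r}^r\lesssim\|u\|_{\lebe^q}^{q/p'}\|\nabla u\|_{\lebe^p}$. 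For $q=\infty$ we have $r=\infty$, and the claim is the trivial bound $\|\gamma u\|_\infty\le\|u\|_\infty$. The estimate extends to $\homosobo^{1,p}\cap\lebe^q$ by density: mollify and cut off, noting that $q>p^*$ keeps the cutoff errors controlled. Combined with the classical trace theorem $\gamma\colon\homosobo^{1,p}\to\homosobo^{1-1/p,p}$, this yields "$\subseteq$" in \eqref{eq:inc1}.

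\emph{Extension.} This is the main obstacle. The standard linear extension $\mathrm{E}_0g(x',x_n)=\eta(x_n)\,(\rho_{x_n}*g)(x')$, with $\rho_t$ a mollifier at scale $t$, lies in $\homosobo^{1,p}$ but in general only in $\lebe^{r}$-type spaces in the interior. It does not gain $\lebe^q$ integrability with $q>r$, so sharpness forces a nonlinear, data-dependent choice of the decay scale. Guided by the scaling relation, we let the support in $x_n$ depend on the local size of $g$. Specifically, we set
\begin{align*}
\mathrm{E}g(x',x_n)=\psi\bigl(x_n/\delta(x')\bigr)\,(\rho_{x_n}*g)(x'),
\end{align*}
where $\delta(x')\approx |M g(x')|^{-(q-r)}$ (more precisely a truncated or regularized version of the maximal function, or $(\rho_{x_n}*|g|)^{-(q-r)}$ evaluated inside the cutoff, so that everything stays Lipschitz). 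This is the heuristic balancing a column of height $\delta$ and value $\lambda$: then $\int|u|^q\approx\lambda^q\delta$ and $\int|\partial_n u|^p\approx\lambda^p\delta^{1-p}$, and both equal $\lambda^r$ exactly when $\delta=\lambda^{-(q-r)}$, since $q-r=(r-p)/(p-1)$. With this choice, $\int|u|^q\lesssim\int (Mg)^{q}\delta\,dx'\lesssim\|g\|_r^r$ by the maximal theorem. The vertical derivative falling on the cutoff is bounded by $\int (Mg)^p\delta^{1-p}=\int(Mg)^r$. The horizontal derivatives of $\psi(x_n/\delta(x'))$ produce $x_n|\nabla\delta|/\delta^2$, which must be handled using the regularity of the smoothed quantity $\rho_{x_n}*|g|$. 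In practice it is cleaner to replace $\delta(x')$ by $\delta$ built from $v(x',x_n)=(\rho_{x_n}*|g|)(x')$ itself, for instance $\mathrm{E}g=\mathrm{E}_0g\cdot\psi\bigl(x_n\,v^{q-r}\bigr)$. The gradient of the cutoff is then controlled by $|\nabla v|\,x_n v^{q-r-1}\lesssim|\nabla v|$ on its support, and $\nabla v\in\lebe^p$ by the linear theory applied to $|g|\in\homosobo^{1-1/p,p}$. The trace is preserved since $\psi=1$ near $x_n=0$. Continuity of $\mathrm{E}$ (not just boundedness) follows from dominated convergence arguments on these explicit formulas. This gives "$\supseteq$" in \eqref{eq:inc1}.

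\emph{Bounded Lipschitz domains.} For \ref{item:inhomogeneous} we use a finite partition of unity subordinate to bi-Lipschitz charts flattening $\partial\Omega$. Bi-Lipschitz maps preserve $\sobo^{1,p}$, $\lebe^q$, $\sobo^{1-1/p,p}$ and $\lebe^r$. On bounded sets the inhomogeneous norms are controlled by the homogeneous ones plus $\lebe^p$, respectively $\lebe^q$, parts, the latter handled by the cutoffs and the inclusion $\lebe^q\subset\lebe^p$ on bounded sets. Applying the half-space results chartwise and gluing with the partition of unity then yields \eqref{eq:inc2} together with the extension operator $\mathrm{E}$.
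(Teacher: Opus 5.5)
Your extension is essentially the paper's. The paper also multiplies a linear extension by the nonlinear cutoff $\eta(x_n|v|^{\beta})$ with $\beta=\frac qp-1=q-r$. It uses $|v(x',x_n)|\le Mf(x')$ to get $|u|\le\min\{Mf(x'),(2/x_n)^{1/\beta}\}$, and splits at $x_n=(Mf(x'))^{-\beta}$ exactly as your column heuristic suggests. Using the mollifier extension and $\rho_{x_n}*|g|$ instead of the Poisson extension and $|v|$ is cosmetic.

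The gap is the trace identity $\gamma\mathrm{E}g=g$. ``$\psi=1$ near $x_n=0$'' holds only when $\rho_{x_n}*|g|$ is bounded, i.e.\ essentially for bounded $g$. For general $g\in(\homosobo^{1-1/p,p}\cap\lebe^{r})(\R^{n-1})$ the cutoff may be active arbitrarily close to $\{x_n=0\}$. This is exactly the support-collapse scenario the paper isolates in its figure and remark. The paper handles it in Step 3. For $g_k\to g$ it extracts a subsequence with a common majorant $|g_k|\le F\in\lebe^{r}$ (Riesz--Fischer), so that $\min\{MF,(2/x_n)^{1/\beta}\}\in\lebe^{q}$ and an analogous $\lebe^{p}$-function dominate $u_k$ and $\nabla u_k$. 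This gives continuity of $\mathrm{E}$, and then $\gamma\mathrm{E}g=g$ follows from the smooth compactly supported case by density and continuity of $\gamma$. Your remark that continuity ``follows from dominated convergence'' leaves out the majorant, which is the nontrivial point since $Mg_k$ must be controlled uniformly in $k$. You also never connect continuity to the trace identity. Within your setup there is also a direct fix. At every Lebesgue point $x'$ of $g$ one has $x_n v(x',x_n)^{\beta}\to 0$, so $\mathrm{E}g(x',x_n)=\rho_{x_n}*g(x')\to g(x')$. The trace of a $\homosobo^{1,p}$-function that is smooth in $\R^n_+$ equals its a.e.\ vertical limit, which gives the identity; but this has to be argued.

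There are also smaller points.
\begin{itemize}
\item In the half-space case you must drop the fixed factor $\eta(x_n)$ from $\mathrm{E}_0$. The datum $g$ need not lie in $\lebe^{p}(\R^{n-1})$; take e.g.\ $g=(1+|x'|)^{-a}$ with $\frac np-1<a\le\frac{n-1}p$. For $|x'|$ large the nonlinear cutoff equals $1$, so $\eta'(x_n)\,\rho_{x_n}*g\notin\lebe^{p}(\R^n_+)$. The nonlinear cutoff alone already gives the decay, and $\|\nabla(\rho_{x_n}*g)\|_{\lebe^p(\R^n_+)}\lesssim[g]_{1-1/p,p,\R^{n-1}}$ holds without any cutoff.
\item For $q=\infty$ the exponent $q-r$ is meaningless. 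That case needs the $\lebe^\infty$-contractive linear extension separately, as in the paper.
\item Since your cutoff is built from the extension of $|g|$, the continuity argument also needs $|g_k|\to|g|$ in $\homosobo^{1-1/p,p}$. This follows by dominated convergence in the Gagliardo double integral. The paper avoids this step by using $|v|$ with $v$ the extension of $f$ itself.
\end{itemize}
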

In view of the limiting exponents, let us note that 
\begin{align*}
q\searrow p^{*}\coloneqq \frac{np}{n-p} \Longrightarrow r \to 1+ \frac{n(p-1)}{n-p} =p\,\frac{n-1}{n-p} \eqqcolon \overline{p}
\end{align*}
and that ${\homosobo}{^{1-1/p,p}}(\R^{n-1})\hookrightarrow \lebe^{\overline{p}}(\R^{n-1})$. As a consequence, the exponents \eqref{eq:rpqconstellation} provide a sharp passage from the $\lebe^{q}$-constrained to the unconstrained case. For future reference we remark that, to the best of our knowledge, Theorem \ref{thm:main} does not follow from routine or even more sophisticated interpolation techniques; see Remark \ref{rem:interpol} for more detail. 

The difficult part of the proof of Theorem \ref{thm:main} is the inclusion '$\supset$' in \eqref{eq:inc1} or \eqref{eq:inc2}, respectively. We have not been able to accomplish this direction by direct use of a linear extension operator. Indeed, the extension or lifting that we introduce in Section \ref{sec:proofmain} to reach the optimal exponent $r$ given by \eqref{eq:rpqconstellation} relies on a nonlinear cut-off procedure. The latter is, a priori, only well-defined for functions which are smooth up to the boundary. By the nonlinearity of our extension, this particularly requires an extra argument to ensure its continuous extendability to the intersection space $(\sobo^{1-1/p,p}\cap\lebe^{r})(\partial\Omega)$. 

The reader might have noticed that Theorem \ref{thm:main} only applies to the exponent range $1<p<n$ but \emph{not} to $p=1$. Indeed, the counterpart of Theorem \ref{thm:main} for $p=1$ is an easier case with a dichotomous statement; the underlying  observation is due to M\"{u}ller \cite{Mueller}, who established the following result.  
\begin{theorem}[$p=1$, M\"{u}ller {\cite{Mueller}}]\label{thm:p=1} Let $p=1$ and $1^{*}\coloneqq \frac{n}{n-1} <q\leq\infty$. Then we have 
\begin{align}\label{eq:p=1halfspace}
\gamma_{\R^{n-1}}((\homosobo{^{1,1}}\cap\lebe^{q})(\R_{+}^{n}))= \begin{cases}
\lebe^{1}(\R^{n-1})&\;\text{if}\;\frac{n}{n-1}<q<\infty, \\ 
(\lebe^{1}\cap\lebe^{\infty})(\R^{n-1})&\;\text{if}\;q=\infty. 
\end{cases}
\end{align}
Likewise, if $\Omega\subset\R^{n}$ is open and bounded with Lipschitz boundary, then we have 
\begin{align}\label{eq:p=1domains}
\gamma_{\partial\Omega}((\sobo{^{1,1}}\cap\lebe^{q})(\Omega))= \begin{cases}
\lebe^{1}(\partial\Omega)&\;\text{if}\;\frac{n}{n-1}<q<\infty, \\ 
\lebe^{\infty}(\partial\Omega)&\;\text{if}\;q=\infty. 
\end{cases}
\end{align} 
Here, $\gamma_{\R^{n-1}}$ and $\gamma_{\partial\Omega}$ denote the usual trace operators on $\homosobo^{1,1}(\R_{+}^{n})$ or $\sobo^{1,1}(\Omega)$, respectively. 
\end{theorem}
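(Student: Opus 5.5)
We have to prove Theorem \ref{thm:p=1}, i.e.\ identify the trace space for $p=1$. There are two directions: the easy inclusion '$\subset$' and the surjectivity '$\supset$'. We treat the half-space first and then transfer to Lipschitz domains by localisation.

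\emph{Inclusion '$\subset$'.} The classical Gagliardo trace theorem gives $\gamma_{\R^{n-1}}u\in\lebe^{1}(\R^{n-1})$ for $u\in\homosobo^{1,1}(\R^n_+)$, normalised to vanish at infinity. For $u\in\homosobo^{1,1}(\R_+^n)$ this follows from
\[
|u(x',0)|\le\int_0^\infty|\partial_nu(x',t)|\dd t
\]
for smooth compactly supported $u$, together with density. For $q=\infty$ we additionally need $\|\gamma u\|_{\lebe^\infty}\le\|u\|_{\lebe^\infty}$. To get it, apply the trace theorem to $(|u|-k)_+$ with $k=\|u\|_{\lebe^\infty}$; this function vanishes identically, and its trace is $(|\gamma u|-k)_+$ because the trace commutes with Lipschitz truncations. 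Hence $|\gamma u|\le k$ $\mathcal H^{n-1}$-a.e.

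\emph{Inclusion '$\supset$' for $q<\infty$.} Given $g\in\lebe^1(\R^{n-1})$, I would use Gagliardo's extension with layers. Write $g=\sum_k g_k$ with $g_k\in\hold_c^\infty$ and $\sum_k\|g_k\|_{\lebe^1}\le 2\|g\|_{\lebe^1}$, where each partial sum approximates $g$ rapidly in $\lebe^1$. Then set
\[
u(x',t)=\sum_k \eta_k(t)\,(g_1+\dots+g_k)\ \text{(telescoped)},
\]
with cutoffs $\eta_k$ supported in thin strips $t\in(t_{k+1},t_k)$. The strip widths $t_k\to0$ are chosen so small that both $\|\nabla u\|_{\lebe^1}\lesssim\|g\|_{\lebe^1}$ and
\[
\|u\|_{\lebe^q}^q\le\sum_k t_k\|g_k\|_{\lebe^q}^q\le\|g\|_{\lebe^1}^q.
\]
This is possible because each $g_k$ is bounded, so $\|g_k\|_{\lebe^q}$ is finite, and we are free to shrink $t_k$. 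The only real point is this freedom: the $\lebe^q$-mass of each layer scales linearly in the thickness, while the gradient cost of Gagliardo's construction is controlled independently of the thickness once $|\partial_t\eta_k|\sim 1/(t_k-t_{k+1})$ is balanced against the $\lebe^1$ differences. Since $t_k$ depends on $g$, the extension is nonlinear, but it is bounded in the sense required.

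\emph{Inclusion '$\supset$' for $q=\infty$.} For $g\in\lebe^1\cap\lebe^\infty$ we additionally need $\|u\|_{\lebe^\infty}\lesssim\|g\|_{\lebe^\infty}$. Truncate each partial sum at level $\|g\|_{\lebe^\infty}$ by composing with $T(s)=\max(-M,\min(M,s))$, where $M=\|g\|_{\lebe^\infty}$. This truncation is $1$-Lipschitz, so the $\lebe^1$ estimates are preserved and the trace is still $g$, since $T\circ g=g$.

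\emph{Domains.} On a bounded Lipschitz $\Omega$ we flatten the boundary with bi-Lipschitz charts and a partition of unity. These preserve $\sobo^{1,1}$, $\lebe^q$, $\lebe^1$ and $\lebe^\infty$. Boundedness of $\partial\Omega$ gives $\lebe^\infty(\partial\Omega)\subset\lebe^1(\partial\Omega)$, which explains the form of \eqref{eq:p=1domains}.

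\emph{Main obstacle.} I expect the main obstacle to be the bookkeeping in Gagliardo's layered construction. One must check that the $\lebe^q$ control can be achieved simultaneously with the $\lebe^1$ gradient bound, including across the transition zones between consecutive layers.
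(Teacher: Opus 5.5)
Your proposal is correct and follows the paper's route: Gagliardo's layered linear interpolation between $\lebe^{1}$-approximants of $g$, with layer thicknesses shrunk to absorb the $\lebe^{q}$-norms and tangential gradients of the approximants (the normal derivative costs only $\sum_k\|g_{k+1}\|_{\lebe^{1}}$ regardless of thickness), followed by routine flattening. The deviations are minor: you get $\|\gamma u\|_{\lebe^{\infty}}\leq\|u\|_{\lebe^{\infty}}$ by truncation rather than the paper's Lebesgue-point formula for the trace, you enforce the $\lebe^{\infty}$-bound by truncating the approximants rather than choosing them uniformly bounded, and your $\lebe^{q}$-estimate should involve the partial sums rather than the increments $g_k$.

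The one step you leave implicit, which the paper proves explicitly, is that the constructed $u$ actually has trace $g$. It follows by combining $\int_{\R^{n-1}}|u(x',t)-\gamma u(x')|\dd x'\leq\int_{0}^{t}\int_{\R^{n-1}}|\nabla u|\dd x'\dd s$ with the fact that each slice $u(\cdot,t)$ is a convex combination of two consecutive approximants that are $\lebe^{1}$-close to $g$.
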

In particular, since $\gamma_{\R^{n-1}}(\homosobo{^{1,1}}(\R_{+}^{n}))=\lebe^{1}(\R^{n-1})$ due to a classical result of Gagliardo \cite{GAGLIARDO57} (see also Mironescu \cite{Mironescu15}), the only integrability improvement for traces can be obtained for $q=\infty$; the same applies to domains. Again, the key point is the surjectivity of the trace operator. To keep the paper self-contained, we revisit M\"{u}ller's argument in Section \ref{sec:p=1} but argue closer to Gagliardo's original construction from \cite[\S 3]{GAGLIARDO57}. This comes with technical benefits (e.g., avoiding the detour over $\bv$ for the extensions) and allows for a direct proof of the attainment of the correct traces.

The conclusions of Theorems \ref{thm:main} and   \ref{thm:p=1} remain valid for  operator-adapted spaces of this sort. We refer the reader to  Section \ref{sec:variations} for the precise statements, where we also showcase the need and the construction of a slightly different trace operator as established recently in \cite{BreitDieningGmeineder,DieningGmeineder}; this also provides an independent proof of the inclusion '$\subset$' in \eqref{eq:p=1halfspace} and \eqref{eq:p=1domains} from Theorem \ref{thm:p=1}.  

\subsection{Notation and background material}\label{sec:notation}
We briefly comment on notation. For $n\in\mathbb{N}$, we denote the unit sphere by $\mathbb{S}^{n-1}\coloneqq \{x\in\R^{n}\colon\;|x|=1\}$ and write 
\begin{align*}
\R_{+}^{n}\coloneqq \{x=(x',x_{n})\colon\;x'\in\R^{n-1},\;x_{n}>0\}
\end{align*}
for the upper half-space in the $n$-th coordinate direction; as mentioned above, it is then customary to simply write $\R^{n-1}$ for $\partial\R_{+}^{n}$. As usual, we write $\mathscr{L}^{n}$ and  $\mathscr{H}^{n-1}$ for the $n$-dimensional Lebesgue and the $(n-1)$-dimensional Hausdorff measures; for brevity, we put $\dif x' \coloneqq \dif\mathscr{H}^{n-1}(x')$.  Moreover, for $f\in\lebe^{1}(\R^{n-1})$, the Hardy-Littlewood maximal function is given by 
\begin{align*}
Mf(x') \coloneqq \sup_{r>0}\dashint_{\ball_{r}(x')}|f(z')|\dif z' \coloneqq \sup_{r>0}\frac{1}{|\ball_{r}(x')|}\int_{\ball_{r}(x')}|f(z')|\dif z',\qquad x'\in\R^{n-1}. 
\end{align*}
Here, $|\ball_{r}(x')|\coloneqq \mathscr{H}^{n-1}(\ball_{r}(x'))$. For an open set $U\subset\R^{n}$, $0<s<1$,   $1\leq p<\infty$ and $v\in\lebe_{\locc}^{1}(\partial U)$, we define the $(s,p)$-Gagliardo seminorm by 
\begin{align*}
[v]_{s,p,\partial U}\coloneqq \Big(\iint_{\partial U\times\partial U}\frac{|v(x)-v(y)|^{p}}{|x-y|^{n-1+sp}}\dif\mathscr{H}^{n-1}(x)\dif\mathscr{H}^{n-1}(y)\Big)^{\frac{1}{p}}.
\end{align*}
If $U=\R_{+}^{n}$, we employ the following definition of  homogeneous Sobolev and fractional Sobolev spaces:
\begin{align*}
\homosobo{^{1,p}}(\R_{+}^{n}) &\coloneqq \overline{\hold_{c}^{\infty}(\overline{\R_{+}^{n}})}^{\|\nabla\cdot\|_{\lebe^{p}({\R_{+}^{n}})}}\;\;\;\text{and}\;\;\;\homosobo{^{s,p}}(\R^{n-1}) \coloneqq \overline{\hold_{c}^{\infty}(\R^{n-1})}^{[\cdot]_{s,p,\R^{n-1}}}. 
\end{align*}
A priori, these closures are taken in $\lebe_{\locc}^{1}(\R_{+}^{n})$ and $\lebe_{\locc}^{1}(\R^{n-1})$. For an open set $U\subset\R^{n}$, $\sobo^{s,p}(\partial U)$ is the linear space of all $v\in\lebe^{p}(\partial U)$ such that $[v]_{s,p,\partial U}<\infty$. Lastly, $c,C>0$ denote generic constants that might change from line to line; we only indicate their precise dependencies if they are required in the sequel.

\section{Proof of Theorem \ref{thm:main}}
\subsection{Auxiliary results}
In view of the proof of Theorem \ref{thm:main}, we now record two preparatory lemmas on harmonic extensions and maximal function estimates as follows. The first result is essentially due to Agmon, Douglis and Nirenberg \cite[Theorem 3.3]{Agmon1959}; the version given below is the variant to be found, e.g., in Galdi \cite[Theorem II.11.6]{Galdi}.
\begin{lemma}\label{lem:GaldiLem}
Let the kernel $K$ be of the form
\begin{equation}\label{eq:K}
    K(x',x_n) = \frac{\widetilde{\omega}(x'/|x|, x_n/|x|)}{|x|^{n-1}}, \quad x=(x',x_{n})=(x_1,\ldots,x_{n-1},x_{n}),
\end{equation}
where the function $\widetilde{\omega}$ is continuous on the half-sphere $\{x\in \mathbb{S}^{n-1}: x_n\ge 0\}$
and satisfies a uniform H\"older condition at points $x\in \mathbb{S}^{n-1}$ with $x_n=0$. Suppose that $\partial_i K$ for $i=1,\ldots,n$, and $\partial_n^2 K$ are continuous in $\{x \in \mathbb{R}^n : x_n > 0\}$ and bounded in $\{x \in \mathbb{R}^n : x_n > 0\} \cap \mathbb{S}^{n-1}$ by a positive constant $\kappa$. Moreover, we assume that 
\begin{equation}
    \int_{\{|x'|=1\}}\widetilde{\omega}(x',0)\dd x'=0. \label{int_cond}
\end{equation}
Then, for a function $f \in \lebe^{p}(\R^{n-1})$ with $[f]_{1-1/p,p,\R^{n-1}}<\infty$, 
\begin{equation*}
    u(x',x_n) \coloneqq (K(\cdot,x_n)\ast f)(x') = \int_{\R^{n-1}}K(x'-y',x_n)f(y') \dd y', \quad(x',x_{n})\in\R_{+}^{n}, 
\end{equation*}
belongs to $\lebe^{p}(\mathbb{R}^n_+)$, and the following inequality holds with a constant $c=c(n,p)>0$:
\begin{align}\label{eq:water}
    \|\nabla u\|_{\lebe^{p}(\mathbb{R}^n_{+})}\leq c [ f ]_{1-\frac{1}{p},p, \mathbb{R}^{n-1}}.
\end{align}
\end{lemma}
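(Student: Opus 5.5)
Since the kernel has the form \eqref{eq:K}, the function $u$ is an $(n-1)$-dimensional convolution in $x'$ with the dilated kernel $K(\cdot,x_n)$, which is homogeneous of degree $-(n-1)$ in $(x',x_n)$. My plan is to estimate each partial derivative $\partial_i u$ separately. The cancellation \eqref{int_cond} is what lets me write the derivatives as difference quotients of $f$, and I would then close the estimate with a Hardy-type inequality in $x_n$. This is the classical Gagliardo/Agmon--Douglis--Nirenberg route.

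\textbf{Step 1: difference representation.} Differentiating under the integral sign gives $\partial_i u(x',x_n)=\int_{\R^{n-1}}\partial_i K(x'-y',x_n)f(y')\dd y'$. Here $\partial_iK$ is homogeneous of degree $-n$ and bounded by $\kappa|x|^{-n}$, hence integrable in $y'$ for fixed $x_n>0$. I then need $\int_{\R^{n-1}}\partial_iK(z',x_n)\dd z'=0$.
\begin{itemize}
\item For $i<n$ this holds because it is the integral of a tangential derivative of a function decaying like $|z'|^{-(n-1)}$; the boundary terms on large spheres decay like $R^{n-2}\cdot R^{-(n-1)}\to0$.
\item For $i=n$, homogeneity gives $\int K(z',x_n)\dd z'=\int K(w',1)\dd w'$, formally independent of $x_n$. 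This integral is only conditionally convergent, since $K(w',1)\sim\widetilde\omega(w'/|w'|,0)|w'|^{1-n}$. The mean-zero condition \eqref{int_cond}, together with the Hölder continuity of $\widetilde\omega$ near the equator, is exactly what makes it converge as a principal value. So I would compute $\partial_n$ of the truncated integrals over $|z'|<R$ and let $R\to\infty$, using the bound on $\partial_n^2K$ to justify the limit.
\end{itemize}
With the cancellation in hand, $\partial_iu(x',x_n)=\int \partial_iK(x'-y',x_n)\,(f(y')-f(x'))\dd y'$, and therefore
\[
|\nabla u(x',x_n)|\le C\int_{\R^{n-1}}\frac{|f(y')-f(x')|}{(|x'-y'|+x_n)^{n}}\dd y'.
\]

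\textbf{Step 2: Hardy-type estimate.} Set $h=y'-x'$ and $g(x',h)=|f(x'+h)-f(x')|$. By Minkowski's inequality in $x'$,
\[
\|\nabla u(\cdot,t)\|_{\lebe^p}\le C\int_{\R^{n-1}}\frac{\|g(\cdot,h)\|_{\lebe^p}}{(|h|+t)^{n}}\dd h .
\]
Switching to polar coordinates, let $\rho=|h|$ and define $\phi(\rho)=\int_{|\sigma|=1}\|g(\cdot,\rho\sigma)\|_{\lebe^p}\dd\sigma$. The bound becomes $C\int_0^\infty\phi(\rho)\rho^{n-2}(\rho+t)^{-n}\dd\rho$. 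I split this at $\rho=t$:
\begin{itemize}
\item for $\rho<t$ the contribution is $\le t^{-n}\int_0^t\phi\rho^{n-2}\dd\rho$;
\item for $\rho>t$ it is $\le\int_t^\infty\phi\rho^{-2}\dd\rho$.
\end{itemize}
Taking $\lebe^p(\dd t)$ and applying the two Hardy inequalities controls both pieces by $C\big(\int_0^\infty\phi(\rho)^p\rho^{-p}\dd\rho\big)^{1/p}$, up to matching the weights. Hölder's inequality on the sphere then gives
\[
\int_0^\infty \phi^p\rho^{-p}\dd\rho\le C\int_{\R^{n-1}}\frac{\|g(\cdot,h)\|^p_{\lebe^p}}{|h|^{n-1+p-1}}\dd h=C[f]^p_{1-1/p,p,\R^{n-1}},
\]
because $(n-1)+(1-1/p)p=n-2+p$. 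This proves \eqref{eq:water}. I would check the exponent bookkeeping with a scaling test: both sides of \eqref{eq:water} scale like $\lambda^{(n-p)/p}$ under $f\mapsto f(\lambda\cdot)$, which pins down the weights.

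\textbf{Step 3: $u\in\lebe^p$, and the main obstacle.} Membership in $\lebe^p$ (really $\lebe^p_{\locc}$ up to the boundary, or on strips) follows from Young's inequality: $\|u(\cdot,t)\|_{\lebe^p}\le\|K(\cdot,t)\|_{\lebe^1}\|f\|_{\lebe^p}$, with a logarithmic bound in $t$ at worst. I expect the main obstacle to be the $i=n$ cancellation in Step 1. Justifying that $\int K(z',t)\dd z'$ is a $t$-independent principal value, and that differentiation commutes with the limit, is precisely where \eqref{int_cond} and the Hölder regularity of $\widetilde\omega$ at $x_n=0$ enter. If this proves delicate, I would instead handle $\partial_nu$ by writing $\partial_n K$ via Euler's identity $x\cdot\nabla K=-(n-1)K$ in terms of the $\partial_iK$ with $i<n$, plus $K/x_n$-type terms.
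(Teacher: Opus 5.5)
Your proof of the gradient bound \eqref{eq:water} is correct. It is the classical Agmon--Douglis--Nirenberg argument behind the result that the paper cites without proof from \cite{Agmon1959} and \cite{Galdi}: first the cancellation $\int_{\R^{n-1}}\partial_iK(z',t)\dd z'=0$, then Hardy's inequality in $t$.

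The weights you left open do match:
\begin{itemize}
\item the inner piece needs Hardy's inequality with weight $t^{-(n-1)p}$, which is admissible since $-(n-1)p<p-1$;
\item the outer piece needs the dual Hardy inequality with weight $t^{0}$;
\item both pieces land on $\int_0^\infty\phi^p\rho^{-p}\dd\rho$.
\end{itemize}

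For $i=n$ the truncation argument closes with less than you invoke. By scaling, $\int_{|z'|<R}\partial_nK(z',t)\dd z'=\partial_t\int_{|w'|<R/t}K(w',1)\dd w'=-t^{-1}\rho^{n-1}\int_{\mathbb{S}^{n-2}}K(\rho\sigma,1)\dd\sigma$ with $\rho=R/t$. As $R\to\infty$ this tends to $-t^{-1}\int_{\mathbb{S}^{n-2}}\widetilde\omega(\sigma,0)\dd\sigma$, which is $0$ by uniform continuity of $\widetilde\omega$ and \eqref{int_cond}. So the bound on $\partial_n^2K$ is not needed. This computation also shows why \eqref{int_cond} cannot be dropped.

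The step that fails as written is Step 3. Young's inequality with $\|K(\cdot,t)\|_{\lebe^1}$ is only available when $\widetilde\omega(\cdot,0)\equiv 0$, as for the Poisson kernel.
\begin{itemize}
\item In general $|K(z',t)|\sim|\widetilde\omega(z'/|z'|,0)|\,|z'|^{1-n}$ as $|z'|\to\infty$, which is not integrable on $\R^{n-1}$.
\item By homogeneity $\|K(\cdot,t)\|_{\lebe^1}=\|K(\cdot,1)\|_{\lebe^1}$ for every $t$, so this norm is either finite for all $t$ or for none. There is no logarithmic-in-$t$ substitute.
\end{itemize}
For a general kernel, a bound $\sup_{t>0}\|u(\cdot,t)\|_{\lebe^p}\le C\|f\|_{\lebe^p}$ needs the uniform $\lebe^p$-boundedness of truncated Calder\'on--Zygmund operators whose angular part $\widetilde\omega(\cdot,0)$ is H\"older continuous with mean zero. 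The H\"older condition at the equator bounds the difference $K(z',t)-\widetilde\omega(z'/|z'|,0)|z'|^{1-n}$ on $\{|z'|>t\}$ by $C\,t^{\alpha}|z'|^{1-n-\alpha}$ plus an $O(t^2|z'|^{-n-1})$ term. Both are integrable there with $t$-independent $\lebe^1$-norm.

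Your suspicion that only a local or strip version can hold is right. The literal claim $u\in\lebe^p(\R^n_+)$ already fails for the Poisson kernel with data satisfying the hypotheses, as shown in Remark \ref{rem:cutoffprelims}. Only the gradient estimate \eqref{eq:water} is used later in the paper.
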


We verify that the Poisson operator satisfies the conditions of the above theorem. The Poisson kernel for the half-space is given by
\begin{equation}\label{eq:KP}
    K_P(x) \coloneqq c_n \frac{x_n}{|x|^n},\qquad x=(x',x_{n})\in\R_{+}^{n},
\end{equation}
where $c_n >0$ is chosen such that $\int_{\mathbb{R}^{n-1}}K_P(x',x_n) \dd\mathscr{H}^{n-1}(x') = 1$ for all $x_n > 0$. By choosing  $\widetilde{\omega}(\xi) = c_n\xi_n$ with $\xi= x/|x|\in \mathbb{S}^{n-1}$, we find that the kernel is of the form \eqref{eq:K}. Furthermore, \eqref{int_cond} is satisfied, since $\widetilde{\omega}(x', 0) = 0$ holds for all $x' \in \mathbb{R}^{n-1}$. Observing that $K_{P}$ is of class $\hold^\infty$ for $x \neq 0$, we directly conclude that $\partial_i K_{P}$ and $\partial_n^2 K_{P}$ are continuous in $\{ x \in \mathbb{R}^n: x_n > 0 \}$ and bounded in $\{x \in \mathbb{R}^n : x_n > 0\} \cap \mathbb{S}^{n-1}$ for all $i=1,..., n$.  Therefore, we have the following lemma.

\begin{lemma}[Properties of the Poisson extension]\label{lem:harmext}
    Let $1 < p < \infty$, and let $f \in \homosobo^{1-\frac{1}{p},p}(\R^{n-1})$. For $x = (x', x_n) \in \mathbb{R}^{n-1} \times (0,\infty)$, we define the \emph{Poisson extension} by \[
    v(x', x_n)  \coloneqq (K_P(\cdot,x_n) * f)(x'),
    \] 
    where $K_P$ is as in \eqref{eq:KP}. Then $v\in\homosobo^{1,p}(\R_{+}^{n})$, and the $\homosobo^{1,p}$-trace of $v$ satisfies $\gamma_{\R^{n-1}}v = f$ on $\mathbb{R}^{n-1}$.  Moreover, there exists a constant $c=c(n,p)>0$ such that  \[
    \| \nabla v \|_{\lebe^p (\R_{+}^{n})} \leq c\, [ f ]_{1-\frac{1}{p},p,\R^{n-1}}.
    \]
\end{lemma}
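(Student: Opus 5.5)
We deduce Lemma \ref{lem:harmext} from Lemma \ref{lem:GaldiLem} by density. The first step is the smooth case: for $f\in\hold_c^\infty(\R^{n-1})$ we have $f\in\lebe^p(\R^{n-1})$ and $[f]_{1-1/p,p,\R^{n-1}}<\infty$. By the verification preceding the statement, $K_P$ satisfies all hypotheses of Lemma \ref{lem:GaldiLem}, so \eqref{eq:water} yields
\begin{align*}
\|\nabla v\|_{\lebe^p(\R^n_+)}\le c\,[f]_{1-\frac1p,p,\R^{n-1}}.
\end{align*}
Next we identify the boundary values. Since $K_P(\cdot,x_n)$ is an approximate identity ($K_P(\cdot,x_n)=x_n^{-(n-1)}K_P(\cdot/x_n,1)$ with unit mass), $v(\cdot,x_n)\to f$ uniformly as $x_n\searrow0$. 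Moreover, $v$ is smooth on $\overline{\R^n_+}$ when $f$ is smooth, so $v$ extends continuously to $\overline{\R^n_+}$ with $v(\cdot,0)=f$.

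To place $v$ in $\homosobo^{1,p}(\R^n_+)$, we multiply by cut-offs $\eta_R(x)=\eta(x/R)$. The error $\|v\nabla\eta_R\|_{\lebe^p}$ tends to zero because of the decay $|v(x)|\lesssim |x|^{-(n-1)}$ for compactly supported $f$; this requires $p(n-1)>n$. For small $p$ we instead subtract averages, or use the Hardy-type bound coming from $\nabla v\in\lebe^p$. A further mollification in the tangential variables then gives approximants in $\hold_c^\infty(\overline{\R^n_+})$ whose traces converge to $f$.

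The general case follows by density. For $f\in\homosobo^{1-1/p,p}(\R^{n-1})$, choose $f_k\in\hold_c^\infty$ with $[f_k-f]_{1-1/p,p}\to0$ and $f_k\to f$ in $\lebe^1_{\locc}$. Linearity and the estimate make $(v_k)$ Cauchy with respect to $\|\nabla\cdot\|_{\lebe^p}$, so it converges in $\homosobo^{1,p}(\R^n_+)$ to some $\tilde v$. Pointwise, $v_k(x)\to v(x)$ for $x_n>0$; this needs the convolution against $f$ to make sense, which we get from the Gagliardo bound placing $f$ in a weighted $\lebe^1$ space (possibly modulo constants, which $K_P$ reproduces). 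Hence $\tilde v=v$, and the estimate passes to the limit. Continuity of the trace on $\homosobo^{1,p}$ gives $\gamma v=\lim\gamma v_k=\lim f_k=f$.

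The main obstacle is the cut-off step in the smooth case together with the homogeneous-space bookkeeping in the density step, namely the convergence of $v_k$ in $\lebe^1_{\locc}$ up to constants. I would handle it first via the Sobolev embedding $\homosobo^{1-1/p,p}\hookrightarrow\lebe^{\overline p}$ for $p<n$ and then treat $p\ge n$ modulo constants.
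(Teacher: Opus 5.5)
Your proposal is correct and follows the paper's route. The paper's entire justification is the check, just before the statement, that $K_P$ satisfies the hypotheses of Lemma \ref{lem:GaldiLem}, after which \eqref{eq:water} is read off. Membership in $\homosobo^{1,p}(\R_{+}^{n})$, the identity $\gamma_{\R^{n-1}}v=f$, and the passage from data in $\lebe^{p}$ to general $f\in\homosobo^{1-1/p,p}(\R^{n-1})$ are left implicit there, and your cut-off and density steps supply exactly these.

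One correction: the cut-off step needs no restriction $p(n-1)>n$. On the annulus $\{R<|x|<2R\}$, for $R$ large relative to $\spt f$, you have $|v|\lesssim R^{-(n-1)}$ and $|\nabla\eta_R|\lesssim R^{-1}$ on a set of measure $\sim R^{n}$. Hence $\|v\nabla\eta_R\|_{\lebe^{p}(\R_{+}^{n})}\lesssim R^{-n(1-1/p)}\to0$ for every $p>1$, so the vague `small $p$' workaround can simply be dropped.
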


\begin{lemma} \label{lem:maximal_estimate}
Let $1 < r < \infty$ and $f \in \lebe^r(\mathbb{R}^{n-1})$. If $v$ denotes the Poisson  extension of $f$ to the upper half-space $\R^{n-1} \times (0,\infty)$ from Lemma \ref{lem:harmext}, then 
\begin{align}\label{eq:cruxmaxest}
|v(x',x_n)| \leq M f(x') \quad \text{for all } (x', x_n) \in \R^{n-1} \times (0, \infty),
\end{align}
where $M$ denotes the Hardy-Littlewood maximal operator on $\R^{n-1}$. Consequently, there exists a constant $C=C(n,r)>0$ such that 
\[
\big\| \sup_{x_n > 0 } |v(\cdot, x_n)| \big\|_{\lebe^r(\mathbb{R}^{n-1})} \leq C \|f\|_{\lebe^r (\R^{n-1})}.
\]
\end{lemma}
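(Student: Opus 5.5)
The estimate \eqref{eq:cruxmaxest} is the classical fact that the Poisson kernel is a radially decreasing approximate identity with unit mass. The maximal bound then follows from the Hardy--Littlewood maximal theorem. I would proceed in three steps.

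First, fix $x_n>0$ and write $\phi_{x_n}(y')\coloneqq K_P(y',x_n)=c_n x_n(|y'|^2+x_n^2)^{-n/2}$. This function is positive, radial, and nonincreasing in $|y'|$, and by the normalisation of $c_n$ it satisfies $\int_{\R^{n-1}}\phi_{x_n}\dd y'=1$. For $f\in\lebe^r(\R^{n-1})$ the convolution $v(x',x_n)=(\phi_{x_n}\ast f)(x')$ is absolutely convergent by Hölder's inequality, since $\phi_{x_n}\in\lebe^{r'}$. Hence
\[
|v(x',x_n)|\le(\phi_{x_n}\ast|f|)(x').
\]

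Second, I use the layer-cake representation of a radial decreasing function. Writing
\[
\phi_{x_n}(y')=\int_0^\infty \mathbbm{1}_{\{\phi_{x_n}>t\}}(y')\dd t,
\]
each superlevel set $\{\phi_{x_n}>t\}$ is an open ball $\ball_{\rho(t)}(0)$, possibly empty. Tonelli's theorem then gives
\begin{align*}
(\phi_{x_n}\ast|f|)(x')&=\int_0^\infty\int_{\ball_{\rho(t)}(x')}|f(z')|\dd z'\dd t\\
&\le Mf(x')\int_0^\infty|\ball_{\rho(t)}|\dd t = Mf(x')\,\|\phi_{x_n}\|_{\lebe^1}=Mf(x').
\end{align*}
This proves \eqref{eq:cruxmaxest} pointwise and with constant exactly $1$. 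Measurability issues are harmless, since both sides are defined everywhere: $Mf$ is lower semicontinuous and $v$ is continuous.

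Third, taking the supremum over $x_n>0$ gives $\sup_{x_n>0}|v(x',x_n)|\le Mf(x')$ for every $x'$. The Hardy--Littlewood maximal theorem on $\R^{n-1}$ for $1<r<\infty$ then yields $\|Mf\|_{\lebe^r}\le C(n,r)\|f\|_{\lebe^r}$, which is the claim.

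The only step needing care is the layer-cake argument. In particular, one must check that the superlevel sets of $\phi_{x_n}$ are centered balls, which holds because $\phi_{x_n}$ is strictly decreasing in $|y'|$, and that the kernel has unit mass uniformly in $x_n$. Beyond that the argument is routine.
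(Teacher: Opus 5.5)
Your proposal is correct and follows essentially the same route as the paper. The paper simply cites \cite[Corollary 2.1.12, Example 2.1.13]{Grafakos2014} for the bound $|(K_P(\cdot,x_n)*f)(x')|\leq\|K_P(\cdot,x_n)\|_{\lebe^{1}(\R^{n-1})}\,Mf(x')=Mf(x')$ (convolution with a radially decreasing integrable kernel) and then applies the Hardy--Littlewood maximal theorem; your layer-cake decomposition into indicator functions of centered balls is a self-contained proof of exactly that cited estimate.
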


\begin{proof}
    The pointwise bound \eqref{eq:cruxmaxest} is shown in \cite[Corollary 2.1.12, Example 2.1.13]{Grafakos2014}. The norm estimate then is a direct consequence of the classical maximal theorem for $r>1$.
\end{proof}

\begin{remark}\label{rem:cutoffprelims}
    Despite being folklore, we explicitly note that the Poisson operator in the unbounded half-space does not map $\homosobo^{1-1/p,p}(\R^{n-1})$ into $\lebe^p(\mathbb{R}_+^n)$. This can be seen as follows:
    Consider the function $f: \mathbb{R}^{n-1} \to \mathbb{R}$ defined by
    $f(x') = (1+|x'|)^{-\alpha}$, where we choose $\alpha$ such that $\frac{n-1}{p} < \alpha \le \frac{n}{p}$. Then $f \in \homosobo^{1-1/p,p}(\mathbb{R}^{n-1})$. Setting $v(x', x_n) \coloneqq (K_P(\cdot,x_n) * f)(x')$, we consider for $x_{n}>1$ the annulus $A_{x_n} = \{ x' \in \mathbb{R}^{n-1} : 1 \leq |x'| < x_n \}$. For $z' \in A_{x_n}$ and $x' \in A_{x_n}$, we have $|x'-z'| < 2x_n$ and so 
\begin{align*}
    K_P(x'-z', x_n) = c_n \frac{x_n}{(x_n^2 + |x'-z'|^2)^{n/2}} \ge c_n \frac{x_n}{(5x_n^2)^{n/2}} = C\, x_n^{-(n-1)}.
\end{align*}
    Moreover, on $A_{x_n}$, we have $f(x') \geq (2x_n)^{-\alpha}$ and so 
    \[
    v(x',x_n) \ge C x_n^{-(n-1)} \int_{A_{x_n}} (2x_n)^{-\alpha} \dd z' = C x_n^{-(n-1)}(x_n^{n-1}-1) x_n^{-\alpha}. 
    \]
    Observing that $x_{n}>2^{\frac{1}{n-1}}$ implies $1-x_{n}^{1-n}>\frac{1}{2}$, we arrive at 
    \begin{align*}
    \| v \|_{\lebe^p(\mathbb{R}_{+}^{n})}^p & \ge \int_{2^{\frac{1}{n-1}}}^{\infty}\int_{A_{x_n}} (C x_n^{-\alpha})^p \dd x' \dif x_{n} \geq C\int_{2^{\frac{1}{n-1}}}^{\infty} x_n^{-\alpha p} (x_{n}^{n-1}-1)\dif x_{n} \\ 
    & \geq C \int_{2^{\frac{1}{n-1}}}^{\infty}x_{n}^{-\alpha p + n-1}(1-x_{n}^{1-n})\dif x_{n} \geq C \int_{2^{\frac{1}{n-1}}}^{\infty}x_{n}^{-\alpha p + n-1}\dif x_{n}, 
    \end{align*}
    where $C=C(n,p,\alpha)>0$ changes from one inequality to the next one. Since we assumed $\alpha \leq \frac{n}{p}$, the remaining integral with respect to $x_n$ diverges. 
\end{remark}

\subsection{Proof of Theorem \ref{thm:main}{\ref{item:homogeneous}} and {\ref{item:inhomogeneous}}}\label{sec:proofmain}
Based on Lemmas \ref{lem:GaldiLem}--\ref{lem:maximal_estimate}, we now come to the proof of Theorem \ref{thm:main}. We first deal with the more difficult case $p^{*}<q<\infty$: 
\begin{proof}[Proof of Theorem \ref{thm:main}, Case  $1<p<p^{*}<q<\infty$]
We consider the homogeneous situation on half-spaces first, see \ref{item:homogeneous}. This is a consequence of a multiplicative trace inequality and an explicit non-linear extension based on the Poisson integral. For notational brevity, we put $X\coloneqq (\homosobo{^{1,p}}\cap\lebe^{q})(\R_{+}^{n})$. 

\textit{Inclusion '$\subset$' in \eqref{eq:inc1}.} The classical trace operator on $\homosobo{^{1,p}}(\R_{+}^{n})$ maps $\gamma_{\R^{n-1}}\colon X\subset\homosobo{^{1,p}}(\R_{+}^{n})\to{\homosobo}{^{1-1/p,p}}(\R^{n-1})$. Hence, it suffices to establish that $\gamma_{\R^{n-1}}\colon X\to\lebe^{r}(\R^{n-1})$. 
This is an easy calculation using H\"older's inequality, which can be found, e.g., in \cite[Chapter~10, Proposition~16.1]{DiBenedetto16}. We include the details here for the sake of completeness. Let $u \in \hold_c^\infty(\overline{\R_{+}^{n}})$. For an arbitrary point $x' \in \R^{n-1}$, the fundamental theorem of calculus yields 
\begin{align}\label{eq:ptwisebound1}
\begin{split}
\lvert u(x', 0)\rvert^r & = - \int_0^\infty \partial_{x_n} \big( \lvert u(x', x_n)\rvert^r \big) \dd x_n \\ &  \leq r \int_0^\infty \lvert u(x', x_n)\rvert^{r-1}  \lvert \nabla u(x', x_n)\rvert \dd x_n.
\end{split}
\end{align}
Because of \eqref{eq:rpqconstellation}, we have $(r-1)p'=q$. Thus, 
integrating \eqref{eq:ptwisebound1} with respect to $x'\in \R^{n-1}$, H\"{o}lder's inequality gives us
\begin{align*}
\int_{\R^{n-1}} \lvert u(x', 0)\rvert^r \dd x' \leq r \left( \int_{\R_{+}^{n}} \lvert \nabla u\rvert^p \dd x \right)^{\frac{1}{p}} \left( \int_{\R_{+}^{n}} \lvert u\rvert^{(r-1)p'} \dd x \right)^{\frac{1}{p'}},
\end{align*}
and so we obtain the multiplicative inequality 
\begin{align*}
\lVert u(\cdot,0)  \rVert_{\lebe^r(\R^{n-1})}^r \leq r \lVert \nabla u \rVert_{\lebe^p(\R_{+}^{n})} \lVert u \rVert_{\lebe^q(\R_{+}^{n})}^{r-1}. 
\end{align*}
By smooth approximation, it then follows that $\gamma_{\R^{n-1}}\colon X\to (\homosobo{^{1-1/p,p}}\cap\lebe^{r})(\R^{n-1})$ boundedly. 

\textit{Inclusion '\,$\supset$' in \eqref{eq:inc1}.} In order to construct an extension operator, we first let $f \in \hold_{c}^{\infty}(\R^{n-1})$ and let $v(x',x_{n})\coloneqq (K_{P}(\cdot,x_{n})*f)(x')$ be the Poisson extension of $f$ to the upper half-space $\R_{+}^{n}$, see Lemma \ref{lem:harmext}. In particular, we have 
\begin{align}\label{eq:stopdraggingmyheartaround}
\gamma_{\R^{n-1}}v = f\;\;\;\text{and}\;\;\; \|\nabla v\|_{\lebe^{p}(\R_{+}^{n})}\leq c\,[f]_{{1-1/p,p,\R^{n-1}}}, 
\end{align}
where $c=c(n,p)>0$. Moreover, by \eqref{eq:cruxmaxest}, we have the pointwise maximal estimate 
\begin{align}\label{eq:pointwisemax}
|v(x', x_n)| \leq (Mf)(x')\qquad\text{for all}\;(x',x_{n})\in{\R_{+}^{n}}.
\end{align}
Since $r>1$, $M$ is bounded on $\lebe^{r}(\R^{n-1})$. 

The lifting $u$ is now defined by truncating $v$ depending on its magnitude. Let $\eta \in \hold^\infty([0, \infty))$ be a cut-off function satisfying $\eta(t)=1$ for $0 \leq t \leq 1$, $\eta(t)=0$ for $t \ge 2$, $0 \leq \eta \leq 1$, and $|\eta'(t)| \leq 2$ for all $t\geq 0$. Define
\begin{align}\label{eq:extdef}
u(x', x_n) \coloneqq   \eta\Big( x_n  |v(x', x_n)|^\beta \Big)v(x', x_n),\qquad (x',x_{n})\in\R_{+}^{n},
\end{align}
where we have put 
\begin{align}\label{eq:pbetar}
\beta \coloneqq \frac{q}{p} - 1\;\;\;\;\text{and therefore}\;\;\;\; q-\beta \stackrel{\eqref{eq:rpqconstellation}}{=} r. 
\end{align}
In view of other potential extensions, this particular choice shall be discussed in Remark \ref{rem:choice} below. Note that, for points $x=(x',x_{n})$ with $u(x)\neq 0$, we have 
\begin{align}\label{eq:xnbeta}
x_n |v(x)|^\beta \leq 2\;\;\;\;\text{and therefore}\;\;\;\;x_n \leq 2 |v(x)|^{-\beta}.
\end{align}
Moreover, since $f\in\hold_{c}^{\infty}(\R^{n-1})$, it follows from \eqref{eq:KP}\emph{ff.} that $\|v\|_{\lebe^{\infty}(\R_{+}^{n})}\leq \|f\|_{\lebe^{\infty}(\R^{n-1})}$. Hence, if $0<x_{n}<\|f\|_{\lebe^{\infty}(\R^{n-1})}^{-1}$, then $x_{n}|v(x',x_{n})|^{\beta}\leq 1$ and so $u(x',x_{n})=v(x',x_{n})$ for all $x'\in\R^{n-1}$ and all such $x_{n}$. Therefore, \eqref{eq:stopdraggingmyheartaround} gives us
\begin{align}\label{eq:itrainsdowninafrica}
\gamma_{\R^{n-1}}u=f
\end{align}
provided that we can establish  $u\in\homosobo{^{1,p}}(\R_{+}^{n})$. This, in particular, shall be a consequence of the next two steps.

\emph{Step 1: $\lebe^{q}$-bounds.} We first give bounds on the $\lebe^q$-norms of $u$. To this end, we use \eqref{eq:pointwisemax}, which holds uniformly in $x_n$. Letting $x'\in\R^{n-1}$ be arbitrary, we split the integral with respect to $x_n$ at $T = 2 (Mf(x'))^{-\beta}$.

For $x_n \leq T$, we estimate $|u(x',x_{n})|$ by $|v(x',x_{n})| \leq Mf(x')$. For $x_n > T$ and assuming that $u(x',x_{n}) \neq 0$, the support condition \eqref{eq:xnbeta} entails the estimate $|u(x',x_{n})| \leq |v(x',x_{n})| \leq (2/x_n)^{1/\beta}$. For $x_n > T$, this bound is strictly smaller than $Mf(x')$; this follows from the definition of $T$. Because of $0\leq\eta\leq 1$, a combination of these estimates yields the pointwise bound 
\begin{align}\label{eq:ptwisebound}
|u(x',x_{n})| \leq G(x',x_{n}) \coloneqq \min\Big\{Mf(x'),\Big(\frac{2}{x_{n}}\Big)^{\frac{1}{\beta}}\Big\},\qquad (x',x_{n})\in\R_{+}^{n}. 
\end{align}
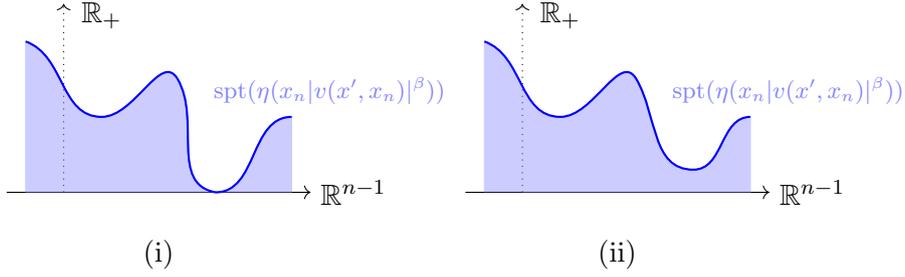
\begin{figure}
\begin{tikzpicture}
\draw[->] (-2.25,0) -- (1.75,0);
\node[below] at (-0.25,-0.5) {(i)};
\node[right] at (1.75,0) {$\R^{n-1}$};
\node[right] at (-1.4,2.35) {$\R_{+}$};
\draw[->,dotted] (-1.5,0) -- (-1.5,2.5);
\draw[-,blue!50!white,fill=blue!50!white,opacity=0.4] (-2,0) --  (-2,2) [out= -20, in =180] to (-1,1) [out=0, in =120] to (0,1.5) [out=-60, in =170] to (0.5,0) [out= 0, in =180] to (1.5,1) --  (1.5,0) -- (-2,0);
\draw[-,blue,thick] (-2,2) [out= -20, in =180] to (-1,1) [out=0, in =120] to (0,1.5) [out=-60, in =170] to (0.5,0) [out=0,in=180] to (1.5,1);
\node[blue!50!white] at (2,1.35) {{\footnotesize $\spt(\eta(x_{n}|v(x',x_{n})|^{\beta}))$}};
\end{tikzpicture}
\begin{tikzpicture}
\draw[->] (-2.25,0) -- (1.75,0);
\node[right] at (1.75,0) {$\R^{n-1}$};
\node[right] at (-1.4,2.35) {$\R_{+}$};
\draw[->,dotted] (-1.5,0) -- (-1.5,2.5);
\draw[-,blue!50!white,fill=blue!50!white,opacity=0.4] (-2,0) -- (-2,2) [out= -20, in =180] to (-1,1) [out=0, in =120] to (0,1.5) [out=-60, in =180] to (0.75,0.3) [out=0,in=180] to (1.5,1) -- (1.5,0) -- (-1.5,0);
\draw[-,blue,thick] (-2,2) [out= -20, in =180] to (-1,1) [out=0, in =120] to (0,1.5) [out=-60, in =180] to (0.75,0.3) [out=0,in=180] to (1.5,1);
\node[blue!50!white] at (2,1.35) {{\footnotesize $\spt(\eta(x_{n}|v(x',x_{n})|^{\beta}))$}};
\node[below] at (-0.25,-0.5) {(ii)};
\end{tikzpicture}
\caption{(Non-)collapse of the support of $u$. If $|v(x',x_{n})|\to\infty$ sufficiently fast as $x_{n}\searrow 0$, then it might happen that $u(x',0)=0$, see (i). If this happens at the points of too large a set,  \eqref{eq:itrainsdowninafrica} might not necessarily be fulfilled. If $v$ is bounded, as is the case in the setting of Step 1 and 2, this scenario cannot occur; see (ii). The conclusion of the continuity argument from Step 3 thus can be interpreted as (i) not happening \emph{too often} for general $(\homosobo^{1-1/p,p}\cap\lebe^{r})$-functions  $f\colon\R^{n-1}\to\R$; see also Remark \ref{rem:choice}.}\label{fig:collapse}
\end{figure}
In consequence, for $(x',x_{n})\in\R_{+}^{n}$, we arrive at 
\begin{align}\label{eq:Lpestimatesstart}
\int_0^\infty |u(x',x_{n})|^q \dd x_n \leq \int_0^T (Mf(x'))^q \dd x_n + \int_T^\infty \left( \frac{2}{x_n} \right)^{\frac{q}{\beta}} \dd x_n.
\end{align}
The first integral equals $T (Mf(x'))^q = 2 (Mf(x'))^{q-\beta}$. The second integral converges since $\frac{q}{\beta} = \frac{pq}{q-p} > 1$ (as $p > 1$), and equals
\begin{align}
\left[ \frac{2^{q/\beta} x_n^{1-q/\beta}}{1-q/\beta} \right]_T^\infty = c\, T^{1-\frac{q}{\beta}} = c\, (Mf(x'))^{-\beta \left(1-\frac{q}{\beta}\right)} = c\, (Mf(x'))^{q-\beta}. \label{eq:secondIntegral}
\end{align}
Since $q-\beta = r$, see \eqref{eq:pbetar}, \eqref{eq:Lpestimatesstart}--\eqref{eq:secondIntegral} yield the  pointwise bound 
\begin{align}\label{eq:tearsforfears}
\int_0^\infty |u(x',x_{n})|^q \dd x_n \leq c\,  (Mf(x'))^r.
\end{align}
Integrating over $x' \in \mathbb{R}^{n-1}$ and applying the boundedness of the maximal operator on $\lebe^r(\mathbb{R}^{n-1})$, we arrive at 
\begin{align}\label{eq:step1concluded}
\lVert u \rVert_{\lebe^q(\R_{+}^{n})}^q \leq c \lVert Mf \rVert_{\lebe^r(\R^{n-1})}^r \leq c \lVert f \rVert_{\lebe^r(\R^{n-1})}^r.
\end{align}

\emph{Step 2: $\lebe^{p}$-gradient bounds.} Next, we give estimates on the weak gradients  $\nabla u$. By the product rule, \begin{align*}
\nabla u(x) = \eta\big( x_n |v(x)|^\beta \big) \nabla v(x) + v(x) \nabla \Big( \eta\big( x_n |v(x)|^\beta \big) \Big) \eqqcolon \mathrm{I}_{1}(x)+\mathrm{I}_{2}(x).
\end{align*}
The term $\mathrm{I}_{1}(x)$ is supported where $\eta(x_{n}|v(x)|^{\beta}) \neq 0$, and is pointwise bounded by $|\nabla v(x)|$. Thus, its $\lebe^p$-norm is controlled by $\lVert \nabla v \rVert_{\lebe^{p}(\R_{+}^{n})}$. For the second term, we define $\Psi(x) \coloneqq x_n |v(x)|^\beta$. In consequence, we have 
\begin{align*}
\nabla \Psi(x) = |v(x)|^\beta \mathbf{e}_n + \beta x_n |v(x)|^{\beta-1} \sgn(v(x)) \nabla v(x),
\end{align*}
where $\mathbf{e}_{n}\coloneqq (0,...,0,1)$. Note that this computation of weak gradients is justified because $v$ is smooth in $\R_{+}^{n}$; the relevant estimates required for the  integrability will be given below. Consequently, the term $\mathrm{I}_{2}(x)$ can be estimated via 
\begin{align*}
|\mathrm{I}_{2}(x)|  & =|v(x)\eta'(\Psi(x)) \nabla \Psi(x)|   \\ & \leq |\eta'(\Psi(x))| \Big( |v(x)|^{\beta+1} + \beta |x_n| |v(x)|^{\beta}|\nabla v(x)| \Big) \eqqcolon A(x) + B(x) 
\end{align*}
with an obvious definition of $A$ and $B$. For term $B$, we observe that on the support of $\eta'$, the argument $\Psi(x',x_{n}) = x_n |v(x',x_{n})|^\beta$ is bounded by 2. Thus, $|B| \leq c |\nabla v|$, and the latter expression belongs to $\lebe^p(\R_{+}^{n})$ by \eqref{eq:stopdraggingmyheartaround}.

For term $A$, we use that $\eta'$ is non-zero only when $1 \leq x_n |v(x',x_{n})|^\beta \leq 2$, which yields the pointwise bound $|v(x', x_n)| \leq (2/x_n)^{1/\beta}$. In addition, the global estimate $|v(x',x_{n})| \leq Mf(x')$ implies that, on the support of $\eta'$, we must have $x_n \ge (Mf(x'))^{-\beta}$. Thus, the integral with respect to $x_n$ is bounded by
\begin{align}
\int_0^\infty |A(x',x_{n})|^p \dd x_n \leq c \int_{(Mf(x'))^{-\beta}}^\infty x_n^{-\frac{p(\beta+1)}{\beta}} \dd x_n, \label{A:is_good}
\end{align}
which converges, since $\frac{p(\beta+1)}{\beta} > 1$. Evaluating at the lower bound yields
\begin{align}\label{eq:sledgehammer}
\left( (Mf(x'))^{-\beta} \right)^{1 - \frac{p(\beta+1)}{\beta}} = (Mf(x'))^{-\beta + p(\beta+1)}.
\end{align}
Recalling the choice $\beta = \frac{q}{p} - 1$ from \eqref{eq:pbetar}, we notice that $p(\beta+1) = q$. Thus, the exponent in \eqref{eq:sledgehammer}  simplifies to
\begin{align}\label{eq:berghammer}
-\beta + p(\beta +1) =- \beta + q \stackrel{\eqref{eq:pbetar}}{=} r. 
\end{align}
Integrating \eqref{A:is_good} with respect to  $x'\in\R^{n-1}$ and using \eqref{eq:sledgehammer}, we arrive at the estimate $\lVert A \rVert_{\lebe^p(\R_{+}^{n})}^p \leq c\, \lVert f \rVert_{\lebe^r(\R^{n-1})}^r$, where $c>0$ is still independent of $f$ and $v$. Summarizing, in conjunction with inequality \eqref{eq:step1concluded} from Step 1, we have established 
\begin{align}\label{eq:chiefbound}
\|u\|_{\lebe^{q}(\R_{+}^{n})}+\|\nabla u\|_{\lebe^{p}(\R_{+}^{n})}\leq c\big(\|f\|_{\lebe^{r}(\R^{n-1})}^{\frac{r}{q}}+\|f\|_{\lebe^{r}(\R^{n-1})}^{\frac{r}{p}}+[f]_{1-1/p,p,\R^{n-1}} \big)
\end{align}
for all $f\in\hold_{c}^{\infty}(\R^{n-1})$. The previous inequality \eqref{eq:chiefbound} entails that $u\in(\homosobo^{1,p}\cap\lebe^{q})(\R_{+}^{n})$, whereby \eqref{eq:itrainsdowninafrica} is now at our disposal.  

\emph{Step 3: Passage to general ${\homosobo}{^{1-1/p,p}}\cap\lebe^{r}$-functions.} It remains to verify that the constructed lifting operator $f\mapsto u$ defined in \eqref{eq:extdef} is continuous from $(\homosobo{^{1-1/p,p}} \cap \lebe^{r})(\R^{n-1})$ to $X$. Let $f,f_{1},f_{2},...\in(\homosobo{^{1-1/p,p}} \cap \lebe^{r})(\R^{n-1})$ be such that $f_{k}\to f$ in $(\homosobo{^{1-1/p,p}} \cap \lebe^{r})(\R^{n-1})$ with respect to the intersection norm. To establish the strong convergence of the extensions $u_k \to u$ in $X$, it suffices to show that every subsequence of $(u_{k})_{k\in\mathbb{N}}$ has a further subsequence converging to $u$ in $X$. Let $(f_{k})_{k\in\mathbb{N}}$ be an arbitrary non-relabelled subsequence.
Since $f_k \to f$ in $\lebe^r(\R^{n-1})$, we can extract another subsequence, still denoted by $(f_k)_{k\in\mathbb{N}}$, such that $f_k \to f$ pointwise $\mathscr{H}^{n-1}$-a.e. on $\R^{n-1}$, and find a majorant $F \in \lebe^r(\R^{n-1})$ such that $|f_k(x')| \leq F(x')$ for $\mathscr{H}^{n-1}$-a.e. $x' \in \R^{n-1}$ and all $k \in \N$.

To see the latter, note that $(f_{k})_{k\in\mathbb{N}}$ is Cauchy with respect to $\|\cdot\|_{\lebe^{r}(\R^{n-1})}$. The classical Riesz-Fischer construction allows us to find a subsequence $(f_{k_{j}})_{j\in\mathbb{N}}$ such that $\|f_{k_{j+1}}-f_{k_{j}}\|_{\lebe^{r}(\R^{n-1})}<2^{-j}$ for all $j\in\mathbb{N}$. It then suffices to put 
\begin{align}\label{eq:RieszFischerMajorant}
F(x') \coloneqq |f_{k_{1}}(x')|+\sum_{l=1}^{\infty}|f_{k_{l+1}}(x')-f_{k_{l}}(x')|,\qquad x'\in\R^{n-1}.
\end{align}
Clearly, $F\in\lebe^{r}(\R^{n-1})$. Moreover, for any arbitrary $j\in\mathbb{N}$, we have $f_{k_{j}}=f_{k_{1}}+\sum_{l=1}^{j-1}(f_{k_{l+1}}-f_{k_{l}})$, and so $|f_{k_{j}}|\leq F$ follows for any $j\in\mathbb{N}$. 

We recall that $v_k$ and $v$ denote the Poisson extensions of $f_k$ and $f$, respectively, and that both $v_{k}$ and $v$ are of class $\hold^{\infty}$ on $\R_{+}^{n}$. Since, in particular, $f_{k}\to f$ strongly in $\homosobo{^{1-1/p,p}}(\R^{n-1})$, it follows from \eqref{eq:water} that $\nabla v_{k}\to \nabla v$ strongly in $\lebe^{p}(\R_{+}^{n};\R^{n})$ and, by Sobolev's embedding, $v_{k}\to v$ strongly in $\lebe^{p^{*}}(\R_{+}^{n})$. As a consequence, there exists another non-relabelled subsequence such that $v_{k}\to v$ and $\nabla v_{k}\to \nabla v$ $\mathscr{L}^{n}$-a.e. in $\R_{+}^{n}$. Going back to \eqref{eq:extdef} and observing that $\eta$ is a continuous function, it follows that $u_k(x) \to u(x) = \eta(x_n |v(x)|^\beta)v(x)$ for $\mathscr{L}^{n}$-a.e. $x \in \R_{+}^{n}$. Furthermore, we recall that 
\begin{align*}
|v_k(x', x_n)| \leq  Mf_k(x') \leq  MF(x') \qquad \text{for all } k \in \N\;\text{and all}\;x'\in\R^{n-1},
\end{align*}
and that $MF \in \lebe^r(\R^{n-1})$. Now let, in analogy to \eqref{eq:ptwisebound}, 
\begin{align*}
 G(x',x_n) \coloneqq \min \Big\{ M F(x'), \left( \frac{2}{x_n} \right)^\frac{1}{\beta} \Big\},\qquad (x',x_{n})\in\R_{+}^{n}, 
 \end{align*}
 so that $|u_k(x',x_n)| \leq |G(x',x_n)|$ pointwise 
 for $\mathscr{L}^{n}$-a.e. $(x',x_{n})\in\R_{+}^{n}$. 
 This follows as in   \eqref{eq:ptwisebound}, where we now work 
 with $T\coloneqq 2(MF(x'))^{-\beta}$, and the fact that $F$ is a majorant for the $f_{k}$'s.  In view of Lebesgue's theorem on dominated convergence, we need to show that $G \in \lebe^q(\R^n_+)$. To this end, we estimate similar to \eqref{eq:Lpestimatesstart}:
 \begin{align*}
\int_0^\infty |G(x', x_n)|^q \dd x_n \leq \int_0^T (MF(x'))^q \dd x_n + \int_T^\infty \left( \frac{2}{x_n} \right)^{\frac{q}{\beta}} \dd x_n.
 \end{align*}
As in \eqref{eq:Lpestimatesstart}--\eqref{eq:step1concluded}, we estimate  
 \begin{align*}
\|G \|_{\lebe^q(\R^n_+)}^{q} \leq c\, \|MF\|_{\lebe^r(\R^{n-1})}^{r} \leq  c\,\|F\|_{\lebe^{r}(\R^{n-1})}^{r}< \infty.
\end{align*}
Therefore, $G \in \lebe^q(\R^n_+)$. Since $u_k \to u$ and $|u_{k}|\leq G$ pointwise $\mathscr{L}^{n}$-a.e. in $\R_{+}^{n}$, dominated convergence implies that  $\|u_k - u\|_{\lebe^q(\R_{+}^{n})} \to 0$.

Next, we consider the weak gradients. Lemma \ref{lem:harmext} entails that, for another non-relabelled subsequence, we have $\nabla v_k \to \nabla v$ pointwise $\mathscr{L}^{n}$-a.e. in $\R_{+}^{n}$ and strongly in $\lebe^{p}(\R_{+}^{n};\R^{n})$. Similar to \eqref{eq:RieszFischerMajorant}, there exists a majorant $H \in \lebe^p(\R_{+}^{n})$ such that $|\nabla v_k| \leq H$ $\mathscr{L}^{n}$-a.e. in $\R_{+}^{n}$. Applying the chain rule yields
\begin{align}\label{long_deri}
\begin{split}
\nabla u_k(x) & = \eta(\Psi_k (x)) \nabla v_k(x)  \\ & + v_k(x) \eta'(\Psi_k (x)) \Big[ |v_k (x)|^\beta \mathbf{e}_n + \beta x_n |v_k (x)|^{\beta-1} \sgn(v_k (x)) \nabla v_k (x) \Big], 
\end{split}
\end{align}
where $\Psi_k (x) \coloneqq x_n |v_k (x)|^\beta$, and likewise with $\Psi (x)\coloneqq x_{n}|v(x)|^{\beta}$:
\begin{align}\label{long_deri_1}
\begin{split}
\nabla u(x) & = \eta(\Psi (x)) \nabla v(x) \\ & + v(x) \eta'(\Psi (x)) \Big[ |v (x)|^\beta \mathbf{e}_n + \beta x_n |v (x)|^{\beta-1} \sgn(v (x)) \nabla v (x) \Big]. 
\end{split}
\end{align}
Again, these computations of weak gradients are justified because $v_{k}$ and $v$ are smooth in $\R_{+}^{n}$; the relevant estimates required for the integrability shall be given below.

Since $v_k \to v$ and $\nabla v_k \to \nabla v$ pointwise $\mathscr{L}^{n}$-a.e. in $\R_{+}^{n}$, the continuity of $\eta$ and $\eta'$ implies  that  $\nabla u_k \to \nabla u$ pointwise $\mathscr{L}^{n}$-a.e. in $\R_{+}^{n}$. To improve the latter to $\lebe^p$-convergence, we construct a dominating function. The first term in \eqref{long_deri} is bounded by $|\nabla v_k| \leq H$. The term involving $\mathbf{e}_n$ is supported where $x_n \geq ( MF(x'))^{-\beta}$ and $|v_k (x)| \leq (2/x_n)^{1/\beta}$, which allows us to estimate
\begin{align*}
\big| v_k(x) \eta'(\Psi_k (x)) |v_k(x)|^\beta \mathbf{e}_n \big| \leq c\, \Big( \frac{2}{x_n} \Big)^{\frac{\beta+1}{\beta}} \mathbbm{1}_{\{ x_n \geq ( MF(x'))^{-\beta} \}}(x)\eqqcolon \widetilde{G}(x). 
\end{align*}
We have $\widetilde{G}\in\lebe^p (\R_{+}^{n})$; this can be seen analogously to  \eqref{A:is_good}\emph{ff.}.
The final term containing $\nabla v_k$ is supported where $x_n |v_k(x)|^\beta \leq 2$, implying boundedness of the coefficient $x_n |v_k(x)|^\beta$. Thus, this term is bounded by $c |\nabla v_k| \leq c H$. 
Combining these estimates, we have $|\nabla u_k| \leq c (H + \widetilde{G}) \in \lebe^p(\R^{n}_+)$ $\mathscr{L}^{n}$-a.e. in $\R_{+}^{n}$. By the dominated convergence theorem, we conclude that $\nabla u_k \to \nabla u$ in $\lebe^p(\R^{n}_+ ;\R^{n})$; in particular, $\nabla u\in\lebe^{p}(\R_{+}^{n};\R^{n})$. Therefore, we have established that $f_{k}\to f$ strongly in $(\homosobo{^{1-1/p,p}}\cap\lebe^{r})(\R^{n-1})$ implies that $u_{k}\to u$ strongly in $(\homosobo{^{1,p}}\cap\lebe^{q})(\R_{+}^{n})$. 
\vspace{0.25cm}

We are now ready to conclude the proof, and so let $f\in(\homosobo{^{1-1/p,p}}\cap\lebe^{r})(\R^{n-1})$ be arbitrary. Since $\hold_{c}^{\infty}(\R^{n-1})$ is dense in $(\homosobo{^{1-1/p,p}}\cap\lebe^{r})(\R^{n-1})$ with respect to the intersection norm, we may choose a sequence $(f_{k})_{k\in\mathbb{N}}\subset\hold_{c}^{\infty}(\R^{n-1})$ such that $f_{k}\to f$ strongly in $(\homosobo{^{1-1/p,p}}\cap\lebe^{r})(\R^{n-1})$. In this situation, \eqref{eq:itrainsdowninafrica} implies that $\gamma_{\R^{n-1}}u_{k}=f_{k}$ for each $k\in\N$, and Step 3 implies that $u_{k}\to u$ strongly in $(\homosobo{^{1,p}}\cap\lebe^{q})(\R_{+}^{n})$. By continuity of the trace operator $\gamma_{\R^{n-1}}\colon\homosobo{^{1,p}}(\R_{+}^{n})\to\homosobo{^{1-1/p,p}}(\R^{n-1})$, it follows that $\gamma_{\R^{n-1}}u_{k}\to\gamma_{\R^{n-1}}u$ strongly in $\homosobo{^{1-1/p,p}}(\R^{n-1})$ and $
\gamma_{\R^{n-1}}u_{k} = f_{k} \to f$ strongly in ${\homosobo}{^{1-1/p,p}}(\R^{n-1})$. 
Hence, $\gamma_{\R^{n-1}}u=f$, and the proof of Theorem \ref{thm:main}\ref{item:homogeneous} for $q<\infty$ 
is complete. 

In view of Theorem \ref{thm:main}\ref{item:inhomogeneous} and Remark \ref{rem:cutoffprelims}, an additional multiplication of \eqref{eq:extdef} with a smooth cut-off in the $n$-th coordinate direction allows to localize the above procedure. In conjunction with routine flattening, this completes the proof.  
\end{proof}
We proceed to address the case $q=\infty$. 
\begin{proof}[Proof of Theorem \ref{thm:main}, case $q=\infty$] Here, we focus on inhomogeneous  spaces, the homogeneous case being analogous. Let  $\eta\in\hold^{\infty}([0,\infty);[0,1])$ be a cut-off function satisfying $\eta(t)=1$ for $t\leq 1$ and $\eta(t)=0$ for $t\geq 2$. Denoting by $\ball_{1}^{(n-1)}(0)$ the open unit ball in $\R^{n-1}$, we introduce for a fixed radially symmetric standard mollifier $\psi\in\hold_{c}^{\infty}(\ball_{1}^{(n-1)}(0))$ and $(x',x_{n})\in\R_{+}^{n}$
\begin{align}\label{eq:extensionLinfty}
    \mathrm{E}f(x', x_n) \coloneqq \eta(x_{n})(\psi_{x_{n}}*u)(x') = \frac{\eta(x_{n})}{x_{n}^{n-1}}\int_{\R^{n-1}}
    \psi\Big(\frac{x'-y'}{x_{n}} \Big)f(y')\dif y', 
    \end{align}
where $f\in\sobo^{1-1/p,p}(\R^{n-1})$. It is well known that $\mathrm{E}\colon\sobo^{1-1/p,p}(\R^{n-1})\to\sobo^{1,p}(\R_{+}^{n})$ is a bounded linear operator and  moreover satisfies  $\gamma_{\R^{n-1}}\mathrm{E}f=f$ $\mathscr{H}^{n-1}$-a.e. on $\R^{n-1}$ for every $f\in\sobo^{1-1/p,p}(\R^{n-1})$; see, e.g., \cite[Corollary 18.29]{Leoni}. Since $\int_{\R^{n-1}}\psi(x')\dif x'=1$, it directly follows from \eqref{eq:extensionLinfty} that $\|\mathrm{E}f\|_{\lebe^{\infty}(\R_{+}^{n})}\leq \|f\|_{\lebe^{\infty}(\R^{n-1})}$. In conclusion, $\mathrm{E}\colon(\sobo^{1-1/p,p}\cap\lebe^{\infty})(\R^{n-1})\to(\sobo^{1,p}\cap\lebe^{\infty})(\R_{+}^{n})$ linearly and boundedly. For an open and bounded set $\Omega\subset\R^{n}$ with Lipschitz boundary, this gives rise to the corresponding lifting operator via localization.

Now let $u\in(\sobo^{1,p}\cap\lebe^{\infty})(\Omega)$. Since $\Omega$ has Lipschitz boundary, we recall from \cite[Theorem 5.7]{EvansGariepy} that, for $\mathscr{H}^{n-1}$-a.e. $x_{0}\in\partial\Omega$, we have 
\begin{align*}
\gamma_{\partial\Omega}u(x_{0}) = \lim_{r\searrow 0}\dashint_{\ball_{r}(x_{0})\cap\Omega}u(x)\dif x \eqqcolon \lim_{r\searrow 0}\frac{1}{\mathscr{L}^{n}(\ball_{r}(x_{0})\cap\Omega)}\int_{\ball_{r}(x_{0})\cap\Omega}u(x)\dif x.  
\end{align*}
Therefore, 
\begin{align}\label{eq:diaz}
|\gamma_{\partial\Omega}u(x_{0})|\leq \limsup_{r\searrow 0}\dashint_{\ball_{r}(x_{0})\cap\Omega}|u(x)|\dif x \leq \|u\|_{\lebe^{\infty}(\Omega)}. 
\end{align}
By arbitrariness of $x_{0}\in\partial\Omega$ and since $\gamma_{\partial\Omega}\colon\sobo^{1,p}(\Omega)\to\sobo^{1-1/p,p}(\partial\Omega)$ is bounded and linear, it follows from \eqref{eq:diaz}  that $\gamma_{\partial\Omega}\colon(\sobo^{1,p}\cap\lebe^{\infty})(\Omega)\to(\sobo^{1-1/p,p}\cap\lebe^{\infty})(\partial\Omega)$ is a bounded linear operator.   This implies the claim. 
\end{proof}

\begin{remark}\label{rem:choice} 
The choice  \eqref{eq:extdef} is designed to get a handle on the behaviour of the extension as $x_{n}\searrow 0$, where the additional $\lebe^{r}(\R^{n-1})$-integrability of the datum particularly must give rise to the improved $\lebe^{q}$-integrability close to $\R^{n-1}$. As a consequence of this construction, if $v(x',x_{n})$ blew up too fast on too large a set $\Sigma\subset\R^{n-1}$ as $x_{n}\searrow 0$, then it could happen that the trace of $u$ from \eqref{eq:extdef} equals zero on $\Sigma$. In this case, $u$ would not have the correct prescribed traces along $\R^{n-1}$; see Figure \ref{fig:collapse}. Together with Steps 1 and 2 in the proof of Theorem \ref{thm:main}\ref{item:homogeneous}, the continuity argument from Step 3 can be understood as ruling out such a behaviour for generic boundary values in $(\homosobo^{1-1/p,p}\cap\lebe^{r})(\R^{n-1})$ on too large a set $\Sigma\subset\R^{n-1}$. 
\end{remark}

\begin{remark}\label{rem:interpol}
Note that the space $(\sobo^{1,p}\cap \lebe^q)(\Omega)$ cannot be seen as a particular case of an anisotropic mixed-norm space $\sobo^{\vec s,\vec p}(\Omega)$ as considered in, e.g., \cite{Johnsen-Sickel08}. For such spaces, the integrability parameter may be different in different directions $x_j$, 
but for each direction it is the same for all derivatives. This is in contrast to the situation considered here. Therefore, trace results on such spaces (see \cite[Theorem~2.2]{Johnsen-Sickel08}, \cite[Proposition~3.7]{Hummel}) cannot be applied.

Moreover, to our knowledge, the results of Theorem~\ref{thm:main} cannot be obtained by a simple interpolation argument. We discuss two possible approaches for a bounded Lipschitz domain $\Omega$.

a) From the classical trace result  and the case $q=\infty$, we obtain the continuity of 
\begin{align*}
    \gamma_{\partial \Omega}\colon (\sobo^{1,p}\cap \lebe^{p^*})(\Omega) &\to (\sobo^{1-1/p,p}\cap \lebe^{\frac{n-1}{n-p}\,p})(\partial\Omega),\\
     \gamma_{\partial \Omega}\colon (\sobo^{1,p}\cap \lebe^{\infty})(\Omega) &\to (\sobo^{1-1/p,p} \cap \lebe^{\infty})(\partial\Omega),
\end{align*}
Here, we use that $\sobo^{1,p}(\Omega) = (\sobo^{1,p}\cap \lebe^{p^*})(\Omega)$ and $\sobo^{1-1/p,p}(\partial\Omega) = (\sobo^{1-1/p,p}\cap \lebe^{\frac{n-1}{n-p}\,p})(\partial\Omega)$ by  Sobolev's embedding theorem 
in $\Omega$ and $\partial\Omega$, respectively. We apply complex interpolation with interpolation parameter  $\theta$ being defined by 
\[ \frac 1q = \frac{1-\theta}{p^*} + \frac{\theta}{\infty}\,. \]
This yields $1-\theta = \frac{p^*}{q} = \frac{np}{(n-p)q}$. By complex interpolation, we obtain the continuity of 
\begin{equation}\label{eq:interp} \gamma_{\partial\Omega}\colon [(\sobo^{1,p}\cap \lebe^{p^*})(\Omega), (\sobo^{1,p} \cap \lebe^{\infty})(\Omega) ]_\theta \to (\sobo^{1-1/p,p} \cap \lebe^{\widetilde r})(\partial\Omega),
\end{equation}
where $\widetilde{r}$ is given by 
\[
    \frac 1{\widetilde{r}} = \frac{1-\theta}{\frac{n-1}{n-p}\,p} = \frac{n}{(n-1)q}\,,
\]
i.e., $\widetilde{r} = \frac{n-1}n\,q$. However, it is well-known that interpolation and intersection do not commute in general. We only have
\[  [(\sobo^{1,p}\cap \lebe^{p^*})(\Omega), (\sobo^{1,p} \cap \lebe^{\infty})(\Omega) ]_\theta \subset (\sobo^{1,p} \cap \lebe^q)(\Omega),\]
but equality does not hold. In fact, a short calculation shows that for $q>p^*$ we get $\widetilde r>r$, so the target space on the right-hand side of \eqref{eq:interp} is smaller than the correct target space in Theorem~\ref{thm:main}. Interpolation and intersection commute only under very strong restrictions on the considered spaces, see, e.g., \cite{Asekritova}, \cite{Garling}. We also refer to the discussion on quasilinearizable interpolation couples and related trace spaces in \cite[Subsection~1.8.4]{Triebel78}.

b) In a second approach, we first interpolate in the interior of the domain and then apply classical trace results. More precisely, we start with the observation that for every $u\in \sobo^{1,p}(\Omega)\cap
\lebe^q(\Omega)$, a fractional Gagliardo-Nirenberg-Sobolev inequality of the form
\begin{equation}\label{eq:interpolation}
     \|u\|_{\sobo^{\theta,p_\theta}(\Omega)} \le C \|u\|_{\sobo^{1,p}(\Omega)}^\theta \|u\|_{\lebe^q(\Omega)}^{1-\theta} 
\end{equation}
holds, where $\theta\in (0,1)$ can be chosen arbitrary and where $p_\theta$ is defined by
\[ \frac1{p_\theta} = \frac\theta p + \frac{1-\theta}q\,.\]
This inequality is a special case of \cite[Theorem~1]{BrezisMironescu}. From \eqref{eq:interpolation}, we get the continuous embedding $(\sobo^{1,p}\cap \lebe^q)(\Omega)\subset \sobo^{\theta,p_\theta}(\Omega)$. If $\theta>1/p_\theta$, we can apply the trace and obtain the continuity of 
\[ \gamma_{\partial\Omega} \colon (\sobo^{1,p}\cap \lebe^q)(\Omega) \to \sobo^{\theta-1/p_\theta, p_\theta}(\partial \Omega).\]
Note that the condition $\theta>1/p_\theta$ is necessary for the continuity of the trace, see, e.g., \cite[Theorem~2.4]{Johnsen-Sickel08}. This condition is equivalent to 
\[ \theta > \frac 1q \Big( 1-\frac 1p+ \frac 1q\Big)^{-1} = \frac{p}{pq-q+p} \eqqcolon \theta_{\min}.\]
As $1/p_\theta = \theta (1/p-1/q)+1/q$ and $q>p$, we see that $p_\theta$ is strictly decreasing as a function of $\theta$. Therefore, by choosing $\theta$ close to $\theta_{\min}$, we can achieve any $p_\theta < p_{\max}$, where 
\[ p_{\max} = \frac{1}{\theta_{\min}} = 1 + q\Big( 1-\frac 1p\Big). \]
Comparing this to \eqref{eq:rpqconstellation}, we see that $p_{\max} = r$. Therefore, this approach yields the continuity of 
\[ \gamma_{\partial\Omega} : (\sobo^{1,p}\cap \lebe^q)(\Omega)\to (\sobo^{1-1/p,p}\cap \lebe^{r-\varepsilon})(\partial\Omega)\]
for any $\varepsilon>0$. This is close to optimal. Note, however, that the case $\varepsilon=0$ cannot be obtained, and that this approach does not give the existence of a continuous extension operator, as we lost information by applying the Gagliardo-Nirenberg-Sobolev inequality \eqref{eq:interpolation}.
\end{remark}
\subsection{Proof of Theorem \ref{thm:p=1}}\label{sec:p=1}
For the sake of completeness and in order to keep our paper self-contained, we now address the case $p=1$ which, in fact, is easier than $1<p<n$. As pointed out in the introduction, the corresponding result is due to M\"{u}ller \cite{Mueller}; our slight modification directly follows Gagliardo's original approach \cite[\S 3]{GAGLIARDO57} and gives rigorous justification of why the underlying extension attains the correct traces indeed. 
\begin{proof}[Proof of Theorem \ref{thm:p=1}]
By routine flattening and localization techniques, it again suffices to cover the homogeneous situation on half-spaces. 

\textit{Inclusion `$\subset$' in \eqref{eq:p=1halfspace}.} If $\frac{n}{n-1}<q<\infty$, then this inclusion directly follows from $(\homosobo^{1,1}\cap\lebe^{q})(\R_{+}^{n})\subset\homosobo^{1,1}(\R_{+}^{n})$ and the fact that the usual trace operator on $\homosobo^{1,1}(\R_{+}^{n})$ maps $\homosobo^{1,1}(\R_{+}^{n})$ to $\lebe^{1}(\R^{n-1})$. If $q=\infty$, then the same argument as in the situation on domains (see, e.g., \cite[Theorem 5.7]{EvansGariepy}) yields that 
\begin{align}\label{eq:debramorgan}
\gamma_{\R^{n-1}}u(x)=\lim_{r\searrow 0}\dashint_{\ball_{r}(x)\cap\R_{+}^{n}}u(y)\dif y
\end{align}
holds for all $u\in\homosobo^{1,1}(\R^{n}_+)$ and $\mathscr{H}^{n-1}$-a.e. $x\in\R^{n-1}$. From here, we deduce that $\|\gamma_{\R^{n-1}}u\|_{\lebe^{\infty}(\R^{n-1})}\leq \|u\|_{\lebe^{\infty}(\R_{+}^{n})}$ provided that $u\in(\homosobo^{1,1}\cap\lebe^{\infty})(\R_{+}^{n})$, and so we infer that $\gamma_{\R^{n-1}}\colon(\homosobo^{1,1}\cap\lebe^{\infty})(\R_{+}^{n})\to(\lebe^{1}\cap\lebe^{\infty})(\R^{n-1})$ too.

\textit{Inclusion '\,$\supset$' in \eqref{eq:p=1halfspace}.} We firstly let $\frac{n}{n-1}<q<\infty$ and let $f\in\lebe^{1}(\R^{n-1})$ be arbitrary. We choose sequences $(f_{j})_{j\in\mathbb{N}}\subset\hold_{c}^{\infty}(\R^{n-1})$ such that 
\begin{align}\label{eq:giveintome}
\begin{split}
    &\|f-f_{j}\|_{\lebe^{1}(\R^{n-1})}\leq 2^{-j}\|f\|_{\lebe^{1}(\R^{n-1})}\qquad\text{for all}\;j\in\mathbb{N}, 
    \end{split}
\end{align}
and note that \eqref{eq:giveintome} holds true for $j=0$ and $f_{0}\coloneqq 0$. We then define, with $\nabla'$ denoting the tangential gradient along $\R^{n-1}$, for $j\in\mathbb{N}_{0}$: 
\begin{align}\label{eq:gammachoose}
\begin{split}
& \gamma_{j}\coloneqq \|f_{j}\|_{\lebe^{q}(\R^{n-1})}^{q} + \|\nabla'f_{j}\|_{\lebe^{1}(\R^{n-1})}, \\ & t_{j+1} \coloneqq t_{j} - s _{j} \coloneqq t_{j} - 2^{-j-1}\frac{\|f\|_{\lebe^{1}(\R^{n-1})}^{q}}{1+\gamma_{j}+\gamma_{j+1}+\|f\|_{\lebe^{1}(\R^{n-1})}^{q}}\;\;\;\text{and}\;\;\; t_{0}\coloneqq \sum_{j=0}^{\infty}s_{j}. 
\end{split}
\end{align}
We follow Gagliardo's classical construction (see, e.g., \cite[\S 3]{GAGLIARDO57}) and define $u\colon\R_{+}^{n}\to\R$ by 
\begin{align}\label{eq:thriller}
u(x',x_{n}) \coloneqq \frac{t_{j}-x_{n}}{t_{j}-t_{j+1}}f_{j+1}(x') + \frac{x_{n}-t_{j+1}}{t_{j}-t_{j+1}}f_{j}(x'),\qquad t_{j+1}\leq x_{n} \leq t_{j}
\end{align}
with $j\in\mathbb{N}_{0}$, and $u(x',x_{n})\coloneq 0$ if $x_{n}\geq t_{0}$. This function is continuous on $\R_{+}^{n}$ and satisfies, for all $\ell\in\{1,...,n-1\}$ and all $j\in\mathbb{N}_0$,  
\begin{align}
&|u(x',x_{n})| \leq |f_{j}(x')|+|f_{j+1}(x')|, \notag \\ 
&|\partial_{\ell}u(x',x_{n})| \leq |\nabla'f_{j+1}(x')| + |\nabla'f_{j}(x')|, \label{eq:gradbounds1}\\ & |\partial_{n}u(x',x_{n})| \leq \frac{|f_{j+1}(x')-f_{j}(x')|}{t_{j}-t_{j+1}}\notag
\end{align}
whenever $x'\in\R^{n-1}$ and $t_{j+1}\leq x_{n} \leq t_{j}$ with $j\in\mathbb{N}_{0}$. We estimate  
\[
\int_{\R_{+}^{n}} |u(x)|^{q}\,\dif x  \stackrel{\eqref{eq:gradbounds1}_{1}}{\leq}  c \sum_{j=0}^{\infty} (t_{j}-t_{j+1})\int_{\R^{n-1}}|f_{j}(x')|^{q}+|f_{j+1}(x')|^{q} \,\dif x'\stackrel{\eqref{eq:gammachoose}}{\leq} c\,\|f\|_{\lebe^{1}(\R^{n-1})}^{q}, 
\]
where $c=c(n,q)>0$ is a constant. Moreover, we have for all $\ell\in\{1,...,n-1\}$
\begin{align*}
\int_{\R^{n-1}\times (0,t_{0})}|\partial_{\ell}u(x',x_{n})|\,\dif x & \stackrel{\eqref{eq:gradbounds1}_{2}}{\leq} \sum_{j=0}^{\infty}(t_{j}-t_{j+1})(\|\nabla'f_{j+1}\|_{\lebe^{1}(\R^{n-1})}+\|\nabla'f_{j}\|_{\lebe^{1}(\R^{n-1})}) \\ & \;\stackrel{\eqref{eq:gammachoose}}{\leq} c\,\|f\|_{\lebe^{1}(\R^{n-1})}^{q}
\end{align*}
and 
\begin{align}\label{eq:spieler}
\begin{split}
\int_{\R^{n-1}\times (0,t_{0})}|\partial_{n}u(x',x_{n})|\,\dif x & \stackrel{\eqref{eq:gradbounds1}_{3}}{\leq}  \sum_{j=0}^{\infty}\int_{\R^{n-1}}|f_{j+1}(x')-f_{j}(x')|\dif x' \\ & \;\stackrel{\eqref{eq:giveintome}}{\leq} c\,\|f\|_{\lebe^{1}(\R^{n-1})}. 
\end{split}
\end{align}
The preceding three estimates imply that $u\in(\dot{\sobo}{^{1,1}}\cap\lebe^{q})(\R_{+}^{n})$, and we note that $u$ is continuous in $\R_{+}^{n}$ by construction. We finish the proof by establishing that $u$ indeed has the correct $\homosobo^{1,1}$-trace. To this end, we recall that the $\homosobo^{1,1}$-trace $\gamma_{\R^{n-1}}u$ is defined as the uniquely determined $\lebe^{1}(\R^{n-1})$-limit of $(u_{k}(x',0))_{k\in\mathbb{N}}$, where $(u_{k})_{k\in\mathbb{N}}\subset\hold_{c}^{\infty}(\overline{\R_{+}^{n}})$ is such that $\|\nabla u-\nabla u_{k}\|_{\lebe^{1}(\R_{+}^{n})}\to 0$ as $k\to\infty$. In particular, the trace does not depend on the specific choice of the sequence  $(u_{k})_{k\in\mathbb{N}}$ with the aforementioned properties. 

In the situation considered here, the membership of the continuous function $u$ in $({\dot{\sobo}}{^{1,1}}\cap\lebe^{q})(\R_{+}^{n})$ allows us to find a sequence $(u_{k})_{k\in\mathbb{N}}\subset\hold_{c}^{\infty}(\overline{\R_{+}^{n}})$ such that $u_{k}\to u$ strongly in $\lebe^{q}(\R_{+}^{n})$, $u_{k}\to u$ pointwise in $\R_{+}^{n}$ and $\nabla u_{k}\to\nabla u$ strongly in $\lebe^{1}(\R_{+}^{n};\R^{n})$. By passing to a non-relabelled subsequence, it is no loss of generality to assume that $u_{k}(x',0)\to \gamma_{\R^{n-1}}u(x')$ for $\mathscr{H}^{n-1}$-a.e. $x'\in \R^{n-1}$. Now, for $x_{n}>0$, Fatou's lemma and the fundamental theorem of calculus then combine to 
\begin{align}\label{eq:JanetJackson}
\begin{split}
\int_{\R^{n-1}} |u(x',x_{n})& -\gamma_{\R^{n-1}}u(x')|\, \dif  x'  \\ & \leq \liminf_{k\to\infty} \int_{\R^{n-1}}|u_{k}(x',x_{n})-u_{k}(x',0)|\, \dif x' \\ 
& \leq \liminf_{k\to\infty} \int_{0}^{x_{n}}\int_{\R^{n-1}}|\nabla u_{k}(x',t)|\,\dif x'\dif t \\ 
& = \int_{0}^{x_{n}}\int_{\R^{n-1}}|\nabla u(x',t)|\,\dif x'\dif t, 
\end{split}
\end{align}
where the ultimate line follows from the strong convergence $\nabla u_{k}\to\nabla u$ in $\lebe^{1}(\R_{+}^{n};\R^{n})$; note that, since $u$ is continuous in $\R_{+}^{n}$, the integral in $\eqref{eq:JanetJackson}_{1}$ is well-defined for any $x_{n}>0$. The estimate \eqref{eq:JanetJackson} entails that there exists a sequence $(x_{n}^{(i)})_{i\in\mathbb{N}}\subset(0,t_{0})$ with $x_{n}^{(i)}\searrow 0$ such that 
\begin{align}\label{eq:MichaelJackson}
\lim_{i\to\infty} \int_{\R^{n-1}} |u(x',x_{n}^{(i)})& -\gamma_{\R^{n-1}}u(x')|\, \dif  x' = 0.
\end{align}
For every $i\in\mathbb{N}$, there exists $j = j(i)\in\mathbb{N}_{0}$ with $t_{j+1}\leq x_{n}^{(i)} \leq t_{j}$, and we note that $i\to\infty$ implies that $j=j(i)\to\infty$. Based on \eqref{eq:thriller}, we record that 
\begin{align}
|u(x',x_{n}^{(i)})-f(x')| & \leq \left\vert \frac{t_{j}-x_{n}}{t_{j}-t_{j+1}}(f_{j+1}(x')-f(x')) + \frac{x_{n}-t_{j+1}}{t_{j}-t_{j+1}}(f_{j}(x')-f(x'))\right\vert \notag \\ 
& \leq |f_{j+1}(x')-f(x')| + |f_{j}(x')-f(x')|. 
\label{eq:dangerous}
\end{align}
In view of \eqref{eq:giveintome}, integrating the previous inequality over $x'\in\R^{n-1}$ gives us 
\begin{align*}
\int_{\R^{n-1}}|f(x')-\gamma_{\R^{n-1}}u(x')|\, \dif x' & \leq \int_{\R^{n-1}}|u(x',x_{n}^{(i)})-\gamma_{\R^{n-1}}u(x')|\, \dif x' \\ 
& \!\!\!\!\!\!\!\!\!\! + \int_{\R^{n-1}}|u(x',x_{n}^{(i)})-f(x')|\, \dif x' \\ 
& \!\!\!\!\!\!\!\!\!\!\!\!\!\!\!\!\!\!\!\!\!\! \stackrel{\eqref{eq:giveintome},\,\eqref{eq:dangerous}}{\leq} \int_{\R^{n-1}}|u(x',x_{n}^{(i)})-\gamma_{\R^{n-1}}u(x')|\, \dif x'\\ 
& \!\!\!\!\!\!\!\!\!\! + c\,2^{-j}\|f\|_{\lebe^{1}(\R^{n-1})}\\ 
& \!\!\!\!\!\!\!\!\!\!\!\!\!\!\!\!\!\!\!\!\!\!\! \stackrel{i,j\to\infty,\,\eqref{eq:MichaelJackson}}{\longrightarrow} 0.
\end{align*}
In conclusion, we have  $\gamma_{\R^{n-1}}u=f$ $\mathscr{H}^{n-1}$
-a.e. in $\R^{n-1}$, and this completes the proof in the case of half-spaces for $\frac{n}{n-1}<q<\infty$.

If $q=\infty$ and accordingly $f\in(\lebe^{1}\cap\lebe^{\infty})(\R^{n-1})$, we not only have \eqref{eq:giveintome} for a suitable sequence $(f_{j})_{j\in\mathbb{N}}$, but can also achieve that  
\begin{align}\label{eq:unifLinftybounds}
\sup_{j\in\mathbb{N}_{0}}\|f_{j}\|_{\lebe^{\infty}(\R^{n-1})}\leq \|f\|_{\lebe^{\infty}(\R^{n-1})}. 
\end{align}
As a substitute of \eqref{eq:gammachoose}, we now put 
\begin{align*}
& \gamma_{j}\coloneqq \|f_{j}\|_{\lebe^{\infty}(\R^{n-1})} + \|\nabla'f_{j}\|_{\lebe^{1}(\R^{n-1})}, \\ & t_{j+1} \coloneqq t_{j} - s _{j} \coloneqq t_{j} - 2^{-j-1}\frac{\|f\|_{\lebe^{1}(\R^{n-1})}}{1+\gamma_{j}+\gamma_{j+1}+\|f\|_{\lebe^{1}(\R^{n-1})}}\;\;\;\text{and}\;\;\; t_{0}\coloneqq \sum_{j=0}^{\infty}s_{j}. 
\end{align*}
Introducing $u$ as in \eqref{eq:thriller}, we may argue as in the case $q<\infty$ to deduce that $u\in\homosobo^{1,1}(\R_{+}^{n})$ and $\gamma_{\R^{n-1}}u=f$. By \eqref{eq:unifLinftybounds}, it immediately follows from \eqref{eq:thriller} that $u\in\lebe^{\infty}(\R_{+}^{n})$ too, and this implies the claim. 
\end{proof}
\begin{remark}[Consistency of Theorems \ref{thm:main} and \ref{thm:p=1}]
Sending $p\searrow 1$ in \eqref{eq:rpqconstellation} yields $r\searrow 1$ provided that $q<\infty$. Thus, if we \emph{informally} interpret $\homosobo^{0,1}$ as $\lebe^{1}$, $\eqref{eq:p=1halfspace}_{1}$ confirms the correct limiting case for $p\searrow 1$. 
\end{remark}
\section{Variations of the theme}\label{sec:variations}
We conclude the paper by addressing variants of Theorems \ref{thm:main} and \ref{thm:p=1} that involve spaces adapted to differential operators. This was indeed our original motivation, see also Remark \ref{rem:generalizations}, and so we briefly describe the underlying context. To this end, let $\mathbb{A}=\sum_{j=1}^{n}A_{j}\partial_{j}$ be a vectorial first order differential operator where, for each $j\in\{1,...,n\}$, $A_{j}\colon\R^{N}\to\R^{M}$ is a fixed linear map. Following \cite{BreitDieningGmeineder,Kalamajska,Smith}, we say that $\mathbb{A}$ is $\mathbb{C}$\emph{-elliptic} provided that its Fourier symbol  $\mathbb{A}[\xi]\coloneqq \sum_{j=1}^{n}A_{j}\xi_{j}\colon \mathbb{C}^{N}\to\mathbb{C}^{M}$ is injective for each $\xi=(\xi_{1},...,\xi_{n})\in\mathbb{C}^{n}\setminus\{0\}$. For an open and bounded set $\Omega\subset\R^{n}$ with Lipschitz boundary, we define for $1\leq p<\infty$  
\begin{align}\label{eq:BVA}
\mathrm{W}^{\mathbb{A},p}(\Omega)\coloneqq \{u\in\lebe^{1}(\Omega;\R^{N})\colon\;\mathbb{A}u\in\lebe^{p}(\Omega;\R^{M})\}. 
\end{align}
For a half-space $\mathbb{H}\subset\R^{n}$, the homogeneous variants $\homosobo^{\mathbb{A},p}(\mathbb{H})$ are defined as the $\|\mathbb{A}\cdot\|_{\lebe^{p}(\mathbb{H})}$-closures of $\hold_{c}^{\infty}(\overline{\mathbb{H}};\R^{N})$ in $\lebe_{\locc}^{1}(\mathbb{H};\R^{N})$. Subject to the $\mathbb{C}$-ellipticity of $\mathbb{A}$ and assuming $1<p<\infty$, it follows from \cite{DieningGmeineder,Kalamajska,Smith} that $\sobo^{\mathbb{A},p}(\Omega)=\sobo^{1,p}(\Omega;\R^{N})$ and $\homosobo^{\mathbb{A},p}(\mathbb{H})=\homosobo^{1,p}(\mathbb{H};\R^{N})$. Hence,  Theorem \ref{thm:main} applies. 

In general, one has $\sobo^{1,1}(\Omega;\R^{N})\subsetneq\sobo^{\mathbb{A},1}(\Omega)$ (see, e.g., \cite{Ornstein}), a fact that is intimately linked with the unboundedness of Calder\'{o}n-Zygmund operators on $\lebe^{1}$. Following \cite{BreitDieningGmeineder,DieningGmeineder,GmeinederRaitaVanSchaftingen2021,GmeinederRaitaVanSchaftingen2024}, the $\mathbb{C}$-ellipticity of $\mathbb{A}$ however allows to conclude the existence of a surjective trace operator $\gamma_{\partial\Omega}\colon\sobo^{\mathbb{A},1}(\Omega)\to\lebe^{1}(\partial\Omega;\R^{N})$, with an analogous result for the homogeneous variants on half-spaces \emph{irrespectively} of the particular half-space. Therefore, in the following, we may restrict ourselves to the particular choice $\mathbb{H}=\R_{+}^{n}$. Here, we have the following result:  
\begin{theorem}\label{thm:operatoradapted}
Let $\mathbb{A}$ be a $\mathbb{C}$-elliptic differential operator of the above form. Moreover, let $p=1$ and $1^{*}\coloneqq \frac{n}{n-1} <q\leq\infty$. Then we have 
\begin{align}\label{eq:p=1halfspaceA}
\gamma_{\R^{n-1}}((\homosobo{^{\mathbb{A},1}}\cap\lebe^{q})(\R_{+}^{n}))= \begin{cases}
\lebe^{1}(\R^{n-1})&\;\text{if}\;\frac{n}{n-1}<q<\infty, \\ 
(\lebe^{1}\cap\lebe^{\infty})(\R^{n-1})&\;\text{if}\;q=\infty. 
\end{cases}
\end{align}
Likewise, if $\Omega\subset\R^{n}$ is open and bounded with Lipschitz boundary, then we have 
\begin{align}\label{eq:p=1domainsA}
\gamma_{\partial\Omega}((\sobo{^{\mathbb{A},1}}\cap\lebe^{q})(\Omega))= \begin{cases}
\lebe^{1}(\partial\Omega)&\;\text{if}\;\frac{n}{n-1}<q<\infty, \\ 
\lebe^{\infty}(\partial\Omega)&\;\text{if}\;q=\infty. 
\end{cases}
\end{align} 
Here, $\gamma_{\R^{n-1}}$ and $\gamma_{\partial\Omega}$ denote the trace operators on $\homosobo^{\mathbb{A},1}(\R_{+}^{n})$ or $\sobo^{\mathbb{A},1}(\Omega)$, respectively. 
\end{theorem}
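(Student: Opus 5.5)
The plan is to prove both inclusions separately, reducing as much as possible to Theorem~\ref{thm:p=1} and to the trace theory for $\homosobo^{\mathbb{A},1}$ from \cite{BreitDieningGmeineder,DieningGmeineder}.

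\emph{Inclusion `$\supset$'.} This follows directly from Theorem~\ref{thm:p=1}, applied componentwise to $\R^{N}$-valued data. Given $f\in\lebe^{1}(\R^{n-1};\R^{N})$ (resp.\ $f\in(\lebe^{1}\cap\lebe^{\infty})$), the Gagliardo-type construction \eqref{eq:thriller} produces $u\in(\homosobo^{1,1}\cap\lebe^{q})(\R_{+}^{n};\R^{N})$ with $\gamma_{\R^{n-1}}u=f$. Since $|\mathbb{A}u|\leq c|\nabla u|$ pointwise, we have $\homosobo^{1,1}\hookrightarrow\homosobo^{\mathbb{A},1}$. It remains to check that the $\homosobo^{\mathbb{A},1}$-trace coincides with the $\homosobo^{1,1}$-trace on $\homosobo^{1,1}$. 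Both traces are defined as the $\lebe^{1}$-limits of $u_{k}(\cdot,0)$ for smooth approximants $u_{k}$, and a $\nabla$-approximating sequence is also $\mathbb{A}$-approximating. Hence continuity of both trace operators gives equality, and the case of domains follows by localization.

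\emph{Inclusion `$\subset$' for $q<\infty$.} This is immediate: the $\mathbb{C}$-elliptic trace operator maps $\homosobo^{\mathbb{A},1}(\R_{+}^{n})\to\lebe^{1}(\R^{n-1};\R^{N})$ by \cite{BreitDieningGmeineder,DieningGmeineder}.

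\emph{Inclusion `$\subset$' for $q=\infty$.} This is the substantive step. For $\homosobo^{1,1}$ one used the Lebesgue-point representation \eqref{eq:debramorgan}, which relies on Poincaré inequalities for the full gradient. That tool is not directly available for $\mathbb{A}$. I would instead show the analogous representation
\begin{align*}
\gamma_{\R^{n-1}}u(x')=\lim_{r\searrow0}\dashint_{\ball_{r}(x')\cap\R_{+}^{n}}u\dif y\qquad\text{for }\mathscr{H}^{n-1}\text{-a.e. }x',
\end{align*}
using the structure of the trace from \cite{BreitDieningGmeineder}. The key ingredient is $\mathbb{C}$-ellipticity, which gives a finite-dimensional nullspace $\ker\mathbb{A}$ consisting of polynomials. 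Combined with a Poincaré-type inequality
\begin{align*}
\inf_{\pi\in\ker\mathbb{A}}\|u-\pi\|_{\lebe^{1}(\ball_{r}^{+})}\leq cr\|\mathbb{A}u\|_{\lebe^{1}(\ball_{r}^{+})},
\end{align*}
this yields a half-ball trace inequality comparing $\dashint_{\ball_{r}(x')\cap\R^{n-1}}|\gamma u-\pi_{r}|$ with $r^{1-n}\|\mathbb{A}u\|_{\lebe^{1}(\ball_{r}^{+})}$. Here $\pi_{r}$ is the projection onto $\ker\mathbb{A}$, whose norms are controlled by averages of $u$ by equivalence of norms on finite-dimensional spaces. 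For $\mathscr{H}^{n-1}$-a.e.\ $x'$ the density $r^{1-n}|\mathbb{A}u|(\ball_{r}^{+}(x'))\to0$, since $|\mathbb{A}u|$ is an $\lebe^{1}$-measure, which is $\mathscr{H}^{n-1}$-negligible. Together with Lebesgue points of $\gamma u$, this gives the representation, and then
\begin{align*}
|\gamma u(x')|\leq c\limsup_{r\searrow0}\|\pi_{r}\|_{\lebe^{\infty}(\ball_{r}^{+})}\leq c\|u\|_{\lebe^{\infty}}.
\end{align*}

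An alternative, possibly simpler, route: approximate $u\in(\homosobo^{\mathbb{A},1}\cap\lebe^{\infty})$ by functions $u_{k}$ that are smooth up to the boundary, via translating inward and mollifying (area-strict convergence as in \cite{BreitDieningGmeineder}). This keeps $\|u_{k}\|_{\infty}\leq\|u\|_{\infty}$ while $u_{k}(\cdot,0)\to\gamma u$ in $\lebe^{1}$. A.e.\ convergence along a subsequence then gives $\|\gamma u\|_{\infty}\leq\|u\|_{\infty}$. I would try this first, as it avoids fine properties altogether. The main obstacle is verifying that this inward-shift mollification converges in the $\mathbb{A}$-norm so that continuity of the trace applies; I expect this to hold by translation-continuity of $\lebe^{1}$ applied to $\mathbb{A}u$. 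Domains follow by flattening and localization. For this step I would use the local $\lebe^{1}$-trace estimates from \cite{DieningGmeineder}, in place of the full-gradient chain rule, to handle the change of coordinates.
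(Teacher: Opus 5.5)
Your arguments for `$\supset$' and for `$\subset$' when $q<\infty$ are the paper's. For the substantive step ($q=\infty$, `$\subset$') you take a different route from the paper.

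The paper works directly with the construction \eqref{eq:replacementsequence} of the trace. Using the $\lebe^{\infty}$-stability of the null-space projections (\eqref{eq:inverse}, \eqref{eq:dangerousMJ}), it shows $|\mathcal{T}_{j}^{(2)}u|\leq c\|u\|_{\lebe^{\infty}}$. It then extracts an a.e.\ convergent subsequence of $\mathcal{T}_{j}^{(2)}u(\cdot,0)\to\gamma_{\R^{n-1}}u$.

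Your preferred Route 2 uses the same principle, namely uniformly bounded approximants whose boundary values converge in $\lebe^{1}$. The approximants are different: $u_{\varepsilon}\coloneqq\rho_{\varepsilon}*\big(u(\cdot+2\varepsilon\mathbf{e}_{n})\big)$. This works, and the obstacle you flag is not one:
\begin{itemize}
\item Since $\mathbb{A}$ has constant coefficients, $\mathbb{A}u_{\varepsilon}=\rho_{\varepsilon}*\big((\mathbb{A}u)(\cdot+2\varepsilon\mathbf{e}_{n})\big)\to\mathbb{A}u$ in $\lebe^{1}(\R_{+}^{n})$ by continuity of translations.
\item Applying the same operation to a $\hold_{c}^{\infty}(\overline{\R_{+}^{n}})$-approximating sequence of $u$ shows $u_{\varepsilon}\in\homosobo^{\mathbb{A},1}(\R_{+}^{n})$ with $\gamma_{\R^{n-1}}u_{\varepsilon}=u_{\varepsilon}(\cdot,0)$.
\item Then \eqref{eq:continuitytraceWA1} gives $u_{\varepsilon}(\cdot,0)\to\gamma_{\R^{n-1}}u$ in $\lebe^{1}$.
\end{itemize}

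What each approach buys: your route gives the sharp bound $\|\gamma_{\R^{n-1}}u\|_{\lebe^{\infty}}\leq\|u\|_{\lebe^{\infty}}$, whereas the paper only gets a constant $c$. It also uses $\mathbb{C}$-ellipticity only through the existence and continuity of the trace. The paper's argument, by contrast, needs no density step and stays entirely inside the definition of the trace.

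Your Route 1 is also sound but heavier. It combines a Poincar\'e inequality, a local trace inequality, and the fact that $\limsup_{r\searrow0}r^{1-n}\int_{\ball_{r}(x')\cap\R_{+}^{n}}|\mathbb{A}u|\dif x=0$ for $\mathscr{H}^{n-1}$-a.e.\ $x'$. There you do not need the average representation at all. Splitting at a Lebesgue point of $\gamma u$ gives $|\gamma u(x')|\leq\limsup_{r}\sup_{\ball_{r}^{+}}|\pi_{r}|$ directly. Deriving the representation would in fact require more than you wrote, because $\dashint_{\ball_{r}(x')\cap\R^{n-1}}|\cdot|$ is not a norm on polynomials (consider $x_{n}$).

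One caveat for domains: flattening is not available in $\sobo^{\mathbb{A},1}$. For non-affine $\Phi$, $\mathbb{A}(u\circ\Phi)$ is not controlled by $\mathbb{A}u$. Route 2 does not need flattening, though. In each Lipschitz chart, translate inward along a fixed transversal direction, mollify, and glue with a partition of unity. The cut-offs only produce zeroth-order terms in $u$, and these lie in $\lebe^{1}(\Omega)$.
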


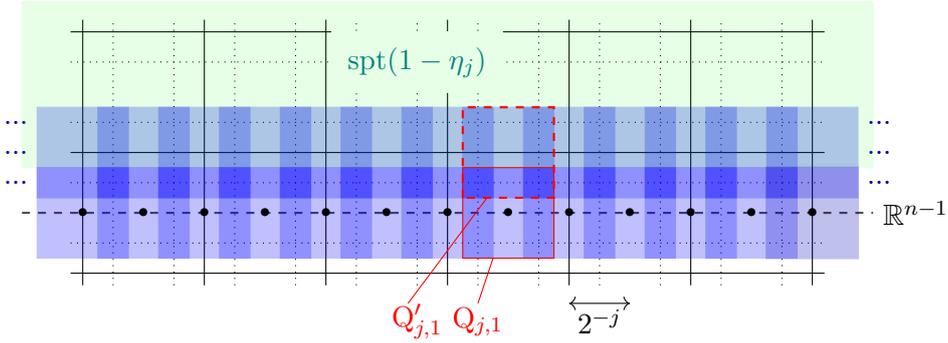
\begin{figure}[t]
	\begin{center}
	\begin{tikzpicture}[scale=0.8]
		\draw[-] (-6.2,1) -- (6.2,1);
		\draw[dashed, thick,opacity=0.7] (-7,-2) -- (7,-2);
		\draw[-] (-6.2,-1) -- (6.2,-1);
		\draw[-] (-6.2,-3) -- (6.2,-3);
		\draw[-] (-2,-3.2) -- (-2,1.2);
		\draw[-] (-4,-3.2) -- (-4,1.2);
		\draw[-] (0,-3.2) -- (0,1.2);
		\draw[-] (2,-3.2) -- (2,1.2);
		\draw[-] (4,-3.2) -- (4,1.2);
		\draw[-] (6,-3.2) -- (6,1.2);
		\draw[-] (-6,-3.2) -- (-6,1.2);
		\draw[dotted] (-6.2,0.5) -- (6.2,0.5);
		\draw[dotted] (-6.2,-0.5) -- (6.2,-0.5);
		\draw[dotted] (-6.2,-1.5) -- (6.2,-1.5);
		\draw[dotted] (-6.2,-2.5) -- (6.2,-2.5);
		\draw[dotted] (-5.5,-3.2) -- (-5.5,1.2);
		\draw[dotted] (-4.5,-3.2) -- (-4.5,1.2);
		\draw[dotted] (-3.5,-3.2) -- (-3.5,1.2);
		\draw[dotted] (-2.5,-3.2) -- (-2.5,1.2);
		\draw[dotted] (-1.5,-3.2) -- (-1.5,1.2);
		\draw[dotted] (-0.5,-3.2) -- (-0.5,1.2);
		\draw[dotted] (0.5,-3.2) -- (0.5,1.2);
		\draw[dotted] (1.5,-3.2) -- (1.5,1.2);
		\draw[dotted] (2.5,-3.2) -- (2.5,1.2);
		\draw[dotted] (3.5,-3.2) -- (3.5,1.2);
		\draw[dotted] (4.5,-3.2) -- (4.5,1.2);
		\draw[dotted] (5.5,-3.2) -- (5.5,1.2);
		\node at (7.7,-2) {$\R^{n-1}$};
        \node[blue!60!black] at (7.1,-1) {$...$};
        \node[blue!60!black] at (7.1,-0.5) {$...$};
        \node[blue!60!black] at (7.1,-1.5) {$...$};
        \node[blue!60!black] at (-7.1,-1) {$...$};
        \node[blue!60!black] at (-7.1,-0.5) {$...$};
        \node[blue!60!black] at (-7.1,-1.5) {$...$};
\draw [fill=blue,opacity=0.25,blue] (5.25,-1.75) rectangle (6.75,-0.25);
        \draw [fill=blue,opacity=0.25,blue] (4.25,-1.75) rectangle (5.75,-0.25);
        \draw [fill=blue,opacity=0.25,blue] (3.25,-1.75) rectangle (4.75,-0.25);
        \draw [fill=blue,opacity=0.25,blue] (2.25,-1.75) rectangle (3.75,-0.25);
        \draw [fill=blue,opacity=0.25,blue] (1.25,-1.75) rectangle (2.75,-0.25);
		\draw [fill=blue,opacity=0.25,blue] (0.25,-1.75) rectangle (1.75,-0.25);
        \draw [fill=blue,opacity=0.25,blue] (-0.75,-1.75) rectangle (0.75,-0.25);
        \draw [fill=blue,opacity=0.25,blue] (-1.75,-1.75) rectangle (-0.25,-0.25);
        \draw [fill=blue,opacity=0.25,blue] (-2.75,-1.75) rectangle (-1.25,-0.25);
        \draw [fill=blue,opacity=0.25,blue] (-3.75,-1.75) rectangle (-2.25,-0.25);
        \draw [fill=blue,opacity=0.25,blue] (-4.75,-1.75) rectangle (-3.25,-0.25);
        \draw [fill=blue,opacity=0.25,blue] (-5.75,-1.75) rectangle (-4.25,-0.25);
        \draw [fill=blue,opacity=0.25,blue] (-6.75,-1.75) rectangle (-5.25,-0.25);
        \draw [fill=blue,opacity=0.25,blue!70!black,thick] (5.25,-2.75) rectangle (6.75,-1.25);
        \draw [fill=blue,opacity=0.25,blue] (4.25,-2.75) rectangle (5.75,-1.25);
        \draw [fill=blue,opacity=0.25,blue] (3.25,-2.75) rectangle (4.75,-1.25);
        \draw [fill=blue,opacity=0.25,blue] (2.25,-2.75) rectangle (3.75,-1.25);
        \draw [fill=blue,opacity=0.25,blue] (1.25,-2.75) rectangle (2.75,-1.25);
		\draw [fill=blue,opacity=0.25,blue] (0.25,-2.75) rectangle (1.75,-1.25);
        \draw [fill=blue,opacity=0.25,blue] (-0.75,-2.75) rectangle (0.75,-1.25);
        \draw [fill=blue,opacity=0.25,blue] (-1.75,-2.75) rectangle (-0.25,-1.25);
        \draw [fill=blue,opacity=0.25,blue] (-2.75,-2.75) rectangle (-1.25,-1.25);
        \draw [fill=blue,opacity=0.25,blue] (-3.75,-2.75) rectangle (-2.25,-1.25);
        \draw [fill=blue,opacity=0.25,blue] (-4.75,-2.75) rectangle (-3.25,-1.25);
        \draw [fill=blue,opacity=0.25,blue] (-5.75,-2.75) rectangle (-4.25,-1.25);
        \draw [fill=blue,opacity=0.25,blue] (-6.75,-2.75) rectangle (-5.25,-1.25);
         \node at (6,-2) {\tiny\textbullet};
         \node at (5,-2) {\tiny\textbullet};
         \node at (4,-2) {\tiny\textbullet};
         \node at (3,-2) {\tiny\textbullet};
         \node at (2,-2) {\tiny\textbullet};
        \node at (1,-2) {\tiny\textbullet};
        \node at (0,-2) {\tiny\textbullet};
        \node at (-1,-2) {\tiny\textbullet};
        \node at (-2,-2) {\tiny\textbullet};
        \node at (-3,-2) {\tiny\textbullet};
         \node at (-4,-2) {\tiny\textbullet};
          \node at (-5,-2) {\tiny\textbullet};
           \node at (-6,-2) {\tiny\textbullet};
           \draw[<->] (2,-3.5) -- (3,-3.5);
           \node at (2.5,-3.8) {$2^{-j}$};
    \draw [red] (0.25,-2.75) rectangle (1.75,-1.25);
    \node [red] at (0.5,-3.8) {$\cube_{j,1}$};
    \node [red] at (-0.5,-3.8) {$\cube'_{j,1}$};
    \draw[red] (-0.65,-3.5) -- (0.65,-1.75);
    \draw[red] (0.5,-3.5) -- (0.75,-2.75);
        \draw[white,fill=white] (-1.9,0) rectangle  (0.9,1);
		\draw [fill=red,opacity=0.1,green] (-7,-1.25) rectangle (7,1.5);
        \draw[green!10!white,fill=green!10!white] (-1.9,0) rectangle  (0.9,1);
        \node [green!50!blue] at (-0.5,0.5) {$\text{spt}(1-\eta_{j})$};
        \draw [red,dashed,thick] (0.25,-1.75) rectangle (1.75,-0.25);
	\end{tikzpicture}
\end{center}
\caption{The geometric setting in Section \ref{sec:variations}. Far away from $\R^{n-1}$, $u$ is left unchanged (see term $\mathcal{T}_{j}^{(1)}u$ in \eqref{eq:replacementsequence}) while, close to $\R^{n-1}$, it is locally replaced by a weighted sum of projections onto the finite dimensional null spaces (see term $\mathcal{T}_{j}^{(2)}u$ in \eqref{eq:replacementsequence}).}\label{fig:stratotrace1}
\end{figure}
We remark that the existence of a bounded linear trace operator $\gamma_{\R^{n-1}}\colon\homosobo^{\mathbb{A},1}(\mathbb{H})\to\lebe^{1}(\partial\mathbb{H};\R^{N})$ for arbitrary half-spaces $\mathbb{H}$ is indeed equivalent to the $\mathbb{C}$-ellipticity of $\mathbb{A}$, and the same holds true with the obvious modifications on open and bounded sets $\Omega\subset\R^{n}$ with Lipschitz boundaries, see \cite{BreitDieningGmeineder,DieningGmeineder}. 

Let us note that, in the present situation, trace operators cannot be introduced as in the gradient case, see \eqref{eq:spieler}\emph{ff}., and their construction shall be revisited below. In particular, \eqref{eq:JanetJackson} cannot be employed, and this is due to the fact that $\homosobo^{1,1}(\R_{+}^{n})\subsetneq\homosobo^{\mathbb{A},1}(\R_{+}^{n})$ in general. On $\homosobo^{1,1}(\R_{+}^{n};\R^{N})$, the trace operator on $\homosobo^{\mathbb{A},1}(\R_{+}^{n})$ coincides with the classical trace operator on $\homosobo^{1,1}(\R_{+}^{n};\R^{N})$. Since the same holds with the natural modifications for open and bounded domains with Lipschitz boundaries, the inclusion `$\subset$' in $\eqref{eq:p=1halfspaceA}_{1}$ and $\eqref{eq:p=1domainsA}_{1}$ follows directly, whereas `$\supset$' in \eqref{eq:p=1halfspaceA} and \eqref{eq:p=1domainsA} is an immediate consequence of the corresponding parts of Theorem \ref{thm:p=1}. It thus remains to establish `$\subset$' in $\eqref{eq:p=1halfspaceA}_{1}$ and $\eqref{eq:p=1domainsA}_{1}$. For this purpose, we provide a direct argument that avoids formula \eqref{eq:debramorgan}  and also yields an alternative proof for the corresponding direction in Theorem \ref{thm:p=1}.

To this end, we revisit the construction of the trace operator from \cite{BreitDieningGmeineder,DieningGmeineder}, which we give in the situation of the half-spaces $\R_{+}^{n}$; as mentioned above, the underlying argument is independent of the particular half-space. We 
describe the geometric set-up first: For $j\in\mathbb{Z}$, we consider cubes 
\begin{align}\label{eq:cubedef}
2^{-j}z + \left[-3\cdot2^{-j-2},3\cdot2^{-j-2}\right]^{n},\qquad z\in\mathbb{Z}^{n}, 
\end{align}
and enumerate these cubes by the index $i\in\mathbb{N}$.  For each $j\in\mathbb{Z}$, this gives us a sequence $(\cube_{j,i})_{i\in\mathbb{N}}$. Since the cubes obtained in this way are a scaled version of the corresponding cubes for $j=0$, they share the following properties: 
\begin{enumerate}[label=({C}{{\arabic*}})]
	\item\label{item:cubo1} For each $j\in\mathbb{Z}$, the cubes $\cube_{j,i}$ cover $\R^{n}$: 
	\begin{align*}
		\R^{n}=\bigcup_{i\in\mathbb{N}}\cube_{j,i}. 
	\end{align*}
	\item\label{item:cubo2} There exists a constant $\mathtt{N}=\mathtt{N}(n)\in\mathbb{N}$ such that for each $x\in\R^{n}$ and each $j\in\mathbb{N}$, there are at most $\mathtt{N}$ cubes $\cube_{j,i}$ such that $x\in\cube_{j,i}$. In particular, the covering $(\cube_{j,i})_{i\in\mathbb{N}}$ is uniformly locally finite. 
	\item\label{item:cubo3} There exists a constant $c>1$ with the following property: If $i_{1},i_{2}\in\mathbb{N}$ are such that $\cube_{j,i_{1}}\cap \cube_{j,i_{2}}\neq\emptyset$, then 
	\begin{align*}
	\tfrac{1}{c}2^{-jn} \leq \mathscr{L}^{n}(\cube_{j,i_{1}}\cap\cube_{j,i_{2}})\leq c 2^{-jn}. 
	\end{align*}
\end{enumerate}
Based on \ref{item:cubo1}--\ref{item:cubo3}, we may choose a partition of unity  $(\varphi_{j,i})_{i\in\mathbb{N}}$ with the following properties: 
\begin{enumerate}[label=({PU}{{\arabic*}})]
	\item\label{item:POU1} \emph{Partition of unity:} For each $j\in\mathbb{Z}$, $\varphi_{j,i} \in\hold_{c}^{\infty}(\cube_{j,i};[0,1])$, and we have that 
	\begin{align*}
		\sum_{i\in\mathbb{N}}\varphi_{j,i}\equiv 1\qquad \text{in}\;\R_{+}^{n}. 
	\end{align*}
	\item\label{item:POU2}  \emph{Scaled derivative bounds:} There exists a constant $c>0$ such that
\begin{align}\label{eq:gradbound0}
	\max_{|\alpha|=1}\sup_{\substack{i\in\mathbb{N} \\ j \in\mathbb{Z}}}	2^{-j}\|\partial^{\alpha}\varphi_{j,i}\|_{\lebe^{\infty}(\R_{+}^{n})}\leq c.
	\end{align}
\end{enumerate}
To conclude our construction, we define for $j\in\mathbb{Z}$ 
\begin{align*}
\mathbb{H}_{j}\coloneqq\{x=(x_{1},...,x_{n})\in\R_{+}^{n}\colon\;0<x_{n}<2^{-j}\}.
\end{align*}
We subsequently choose a localization function  $\eta_{j}\in\hold^{\infty}(\overline{\R_{+}^{n}};[0,1])$ such that
\begin{align}\label{eq:etaloco} \mathbbm{1}_{\mathbb{H}_{j+1}}\leq\eta_{j}\leq\mathbbm{1}_{\mathbb{H}_{j}}
\end{align}
and
\begin{align}\label{eq:gradbound}
\max_{|\alpha|= 1}\sup_{j\in\mathbb{Z}}2^{-j}\|\partial^{\alpha}\eta_{j}\|_{\lebe^{\infty}(\R_{+}^{n})}\leq c.
\end{align}
Lastly, if a cube $\cube_{j,i}$ is defined by \eqref{eq:cubedef} for some $z=(z_{1},...,z_{n})\in\mathbb{Z}^{n}$, we put 
\begin{align*}
 \cube'_{j,i} := 2^{-j}(z_{1},...,z_{n-1},z_{n}+1) + \left[-3\cdot 2^{-j-2},3\cdot 2^{-j-2}\right]^{n}. 
\end{align*}
In other words, $\cube'_{j,i}$ is obtained by shifting $\cube_{j,i}$ by $2^{-j}$ in the $n$-th coordinate direction. 

Now let $\cube\subset\R^{n}$ be a cube. As a key consequence of $\mathbb{C}$-ellipticity, the null space 
\begin{align*}
\ker(\mathbb{A};\cube)\coloneq \{u\in\lebe_{\locc}^{1}(\cube;\R^{N})\colon\;\mathbb{A}u=0\;\text{in}\;\mathscr{D}'(\cube;\R^{M})\} 
\end{align*}
is finite dimensional and is contained in the space $\mathscr{P}_{m}(\R^{n};\R^{N})$ of $\R^{N}$-valued polynomials of a fixed maximal degree $m\in\mathbb{N}_{0}$. We choose an orthonormal basis $\{\pi_{1},...,\pi_{l}\}$ of $\ker(\mathbb{A};[0,1]^{n})$ with respect to the $\lebe^{2}([0,1]^{n};\R^{N})$-inner product and define 
\begin{align}\label{eq:projectiondef}
\Pi_{[0,1]^{n}}^{\mathbb{A}}u\coloneqq \sum_{i=1}^{l}\langle u,\pi_{i}\rangle \pi_{i},\qquad u\in\lebe^{2}([0,1]^{n};\R^{N}). 
\end{align}
By translations and rotations, this gives rise to projection operators $\Pi_{\cube}^{\mathbb{A}}\colon\lebe^{2}(\cube;\R^{N})\to\ker(\mathbb{A};\cube)$. We note that \eqref{eq:projectiondef} and hereafter $\Pi_{\cube}^{\mathbb{A}}u$ are well-defined if merely $u\in\lebe^{1}(\cube;\R^{N})$; this is due to the fact that the $\pi_{i}$'s are polynomials. 
\begin{lemma}
Let $\mathbb{A}$ be a first order $\mathbb{C}$-elliptic differential operator of the form described above. Then there exist constants  $c,C>0$ such that 
\begin{align}\label{eq:inverse}
\|\Pi^{\mathbb{A}}_{\cube}u\|_{\lebe^{\infty}(\cube)}\leq c\,\dashint_{\cube}|\Pi^{\mathbb{A}}_{\cube}u|\,\dif x \leq  C\,\dashint_{\cube}|u|\,\dif x
\end{align}
holds for all $u\in\homosobo^{\mathbb{A},1}(\R_{+}^{n})$ and all cubes $\cube\Subset\R_{+}^{n}$.
\end{lemma}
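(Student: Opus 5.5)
The plan is to reduce both inequalities in \eqref{eq:inverse} to the reference cube $[0,1]^{n}$ by scaling and translation, and then to use finite dimensionality of $\ker(\mathbb{A};[0,1]^{n})$ together with the explicit form \eqref{eq:projectiondef} of the projection.

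\emph{Reduction to the unit cube.} Let $\cube=x_{0}+\ell[0,1]^{n}$ (up to the rigid motion used to define $\Pi^{\mathbb{A}}_{\cube}$) and set $T(y)\coloneqq x_{0}+\ell y$. Since $\mathbb{A}$ is homogeneous of first order with constant coefficients, $\mathbb{A}(w\circ T)=\ell(\mathbb{A}w)\circ T$, so $w\in\ker(\mathbb{A};\cube)$ if and only if $w\circ T\in\ker(\mathbb{A};[0,1]^{n})$. Consequently $\Pi^{\mathbb{A}}_{\cube}u=(\Pi^{\mathbb{A}}_{[0,1]^{n}}(u\circ T))\circ T^{-1}$; this is precisely how the projections on general cubes are obtained from \eqref{eq:projectiondef}. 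Both the $\lebe^{\infty}$-norm and the mean value $\dashint$ are invariant under $T$. Hence it suffices to prove \eqref{eq:inverse} for $\cube=[0,1]^{n}$ and arbitrary $u\in\lebe^{1}([0,1]^{n};\R^{N})$, with constants depending only on $\mathbb{A}$ and $n$. In particular, the hypothesis $u\in\homosobo^{\mathbb{A},1}(\R_{+}^{n})$ with $\cube\Subset\R_{+}^{n}$ is only needed to guarantee $u\in\lebe^{1}(\cube;\R^{N})$, so that $\Pi^{\mathbb{A}}_{\cube}u$ is defined.

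\emph{First inequality.} By $\mathbb{C}$-ellipticity, $V\coloneqq\ker(\mathbb{A};[0,1]^{n})$ is a finite-dimensional subspace of $\mathscr{P}_{m}(\R^{n};\R^{N})$. On a finite-dimensional space all norms are equivalent, and both $\|\cdot\|_{\lebe^{\infty}([0,1]^{n})}$ and $\|\cdot\|_{\lebe^{1}([0,1]^{n})}$ are norms on $V$: a polynomial vanishing a.e. on the cube vanishes identically. This gives $\|w\|_{\lebe^{\infty}}\leq c\int_{[0,1]^{n}}|w|\dif x$ for all $w\in V$, and applying it to $w=\Pi^{\mathbb{A}}_{[0,1]^{n}}u\in V$ yields the first inequality.

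\emph{Second inequality.} From \eqref{eq:projectiondef},
\[
\int_{[0,1]^{n}}|\Pi^{\mathbb{A}}_{[0,1]^{n}}u|\dif x\leq\sum_{i=1}^{l}|\langle u,\pi_{i}\rangle|\,\|\pi_{i}\|_{\lebe^{1}}\leq\Big(\sum_{i=1}^{l}\|\pi_{i}\|_{\lebe^{\infty}}\|\pi_{i}\|_{\lebe^{1}}\Big)\int_{[0,1]^{n}}|u|\dif x,
\]
and the bracket is a finite constant because the $\pi_{i}$ are polynomials, hence bounded on the cube.

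No step is a genuine obstacle. The only point needing care is the scaling step: one has to check that the kernel and the projections transform compatibly under affine maps, including the rigid motions, and that the $\lebe^{2}$-orthonormality of the basis is preserved after normalising by $|\cube|$. Otherwise the constants would depend on the side length of $\cube$. This is exactly what the convention ``by translations and rotations'' preceding the lemma encodes, and it is where I would be explicit.
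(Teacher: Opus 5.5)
Your proof is correct and takes essentially the paper's route: the paper cites \cite[Lemma 3.5]{DieningGmeineder} and justifies only the first inequality, by finite dimensionality of the null space, equivalence of norms and scaling, exactly as you do, while your orthonormal-basis bound supplies the cited second inequality in the standard way. One small correction to your last paragraph: the reduction needs only translations and dilations, which suffice here because all cubes in the paper are axis-parallel, and rotations should not be claimed compatible, since $\ker(\mathbb{A})$ is in general not invariant under $w\mapsto w\circ R$ (for rotated cubes you could instead run both estimates directly on $\cube$, using that $\mathscr{P}_{m}(\R^{n};\R^{N})$ is invariant under all affine maps).
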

The preceding lemma is taken from \cite[Lemma 3.5]{DieningGmeineder}. We note that $\eqref{eq:inverse}_{1}$ is a consequence of $\dim(\ker(\mathbb{A};\cube))\leq m<\infty$ and the equivalence of all norms on finite dimensional spaces together with the correct scaling. We are now ready to give the:

\begin{proof}[Proof of Theorem \ref{thm:operatoradapted}] First, we recall the construction of the trace operator from \cite{BreitDieningGmeineder,DieningGmeineder}. As mentioned above, the gradients of $\homosobo^{\mathbb{A},1}$-functions do not have to exist as $\lebe^{1}$-maps, whereby an approach different from the usual one (see \eqref{eq:JanetJackson}\emph{ff.}) is required. For this reason and following \cite[Section 5]{DieningGmeineder}, we define for $u\in\homosobo{^{\mathbb{A},1}}(\overline{\R_{+}^{n}})$ the approximating  sequence $(\mathcal{T}_{j}u)_{j\in\mathbb{Z}}$ by 
\begin{align}\label{eq:replacementsequence}
\mathcal{T}_{j}u \coloneqq \mathcal{T}_{j}^{(1)}u+\mathcal{T}_{j}^{(2)}u \coloneqq (1-\eta_{j})u + \eta_{j}\sum_{i\in\mathbb{N}}\varphi_{j,i}\Pi_{\cube'_{j,i}}^{\mathbb{A}}u. 
\end{align}
In essence, \eqref{eq:replacementsequence} does not change $u$ far away from $\R^{n-1}$, whereas it replaces $u$ by a weighted sum of projections onto the null space of $\mathbb{A}$ on cubes that intersect with $\R^{n-1}$; see Figure \ref{fig:stratotrace1} for this geometric setting. We note that, since $\mathcal{T}_{j}^{(2)}u$ is a locally finite sum of smooth functions, its restriction to $\R^{n-1}$ is well-defined.

In \cite[Proposition 5.2, Eqs. (5.9)--(5.13) and Remark 5.5]{DieningGmeineder}, it is shown that 
\begin{align}\label{eq:traceoperatorWA1}
\gamma_{\R^{n-1}}u \coloneqq \lim_{j\to\infty}\mathcal{T}_{j}^{(2)}u(\cdot,0) 
\end{align}
is well-defined, where the limit is taken with respect to strong convergence in $\lebe^{1}(\R^{n-1})$, and one has the estimate 
\begin{align}\label{eq:continuitytraceWA1}
\|\gamma_{\R^{n-1}}u\|_{\lebe^{1}(\R^{n-1})}\leq c\|\mathbb{A}u\|_{\lebe^{1}(\R_{+}^{n})}\qquad\text{for all}\;u\in\homosobo^{\mathbb{A},1}(\R_{+}^{n}).
\end{align}
The estimates underlying \eqref{eq:traceoperatorWA1} and \eqref{eq:continuitytraceWA1} rely on a telescope sum argument, which allows to exclusively work on the elements of the null spaces by use of Poincar\'{e}-type inequalities, see \cite[Eq. (5.9)\emph{ff.}]{DieningGmeineder}. This, in turn, allows to transfer the estimates of differences $\mathcal{T}_{j+1}^{(2)}u-\mathcal{T}_{j}^{(2)}u$ to elements of the null spaces, where all norms are equivalent and the basic obstruction from \cite{Ornstein} thus becomes invisible; in particular, on such elements, the scaled $\sobo^{1,1}$-norm on cubes is equivalent to the $\lebe^{1}$-norm on cubes. Moreover, there holds $\gamma_{\R^{n-1}}u(0,\cdot)=u|_{\R^{n-1}}$ whenever $u\in\hold(\overline{\R_{+}^{n}};\R^{N})\cap\homosobo^{\mathbb{A},1}(\R_{+}^{n})$, whereby \eqref{eq:traceoperatorWA1} is indeed the uniquely determined trace operator by \eqref{eq:continuitytraceWA1} and density. 

Because of this particular definition of the trace operator on $\homosobo^{\mathbb{A},1}(\R_{+}^{n})$, we must directly establish that $\|\mathcal{T}{_{j}^{(2)}}u\|_{\lebe^{\infty}(\R_{+}^{n})}$ can be bounded by $c\|u\|_{\lebe^{\infty}(\R_{+}^{n})}$ provided that $u\in\homosobo^{\mathbb{A},1}(\R_{+}^{n})\cap\lebe^{\infty}(\R_{+}^{n};\R^{N})$. In the following, we fix such a map $u$. Now, for $j\in\mathbb{Z}$ and $i\in\mathbb{N}$, we note that if $\cube_{j,i}$ is centered at some element of $\R^{n-1}$, then $\cube'_{j,i}\subset 3\cube_{j,i}$; here, $3\cube_{j,i}$ denotes the cube with the same center and orientation as $\cube_{j,i}$ but thrice its side length. We note that both 
\begin{align*}
v\mapsto \dashint_{3\cube_{j,i}}|v|\,\dif x\;\;\;\text{and}\;\;\;v\mapsto\dashint_{\cube'_{j,i}}|v|\,\dif x \qquad\text{are norms on $\mathscr{P}_{m}(\R^{n};\R^{N})$}. 
\end{align*}
Since all norms are equivalent on finite dimensional spaces, a continuity and scaling argument implies that there exists a constant $c=c(n,m,N)\geq 1$ such that 
\begin{align}\label{eq:dangerousMJ}
\frac{1}{c}\dashint_{3\cube_{j,i}}|v|\,\dif x \leq \dashint_{\cube'_{j,i}}|v|\,\dif x \leq c\dashint_{3\cube_{j,i}}|v|\,\dif x\qquad\text{for all}\;v\in\mathscr{P}_{m}(\R^{n};\R^{N}). 
\end{align}
Therefore, using the equivalence of all norms on the null space of $\mathbb{A}$ in conjunction with scaling in the first step, we obtain 
\begin{align}\label{eq:billiejean}
\begin{split}
\|\Pi_{\cube'_{j,i}}^{\mathbb{A}}u\|_{\lebe^{\infty}(\cube_{j,i})} & \leq c \dashint_{\cube_{j,i}}|\Pi_{\cube'_{j,i}}^{\mathbb{A}}u|\,\dif x \leq c \dashint_{3\cube_{j,i}}|\Pi_{\cube'_{j,i}}^{\mathbb{A}}u|\,\dif x \\ & \!\!\!\! \stackrel{\eqref{eq:dangerousMJ}}{\leq} c \dashint_{\cube'_{j,i}}|\Pi_{\cube'_{j,i}}^{\mathbb{A}}u|\,\dif x \stackrel{\eqref{eq:inverse}}{\leq} c \dashint_{\cube'_{j,i}}|u|\,\dif x \leq c\,\|u\|_{\lebe^{\infty}(\R_{+}^{n})}, 
\end{split}
\end{align}
and so we conclude for $x\in\overline{\R_{+}^{n}}$ that 
\begin{align}\label{eq:minuano}
\begin{split}
|\mathcal{T}_{j}^{(2)}u(x)| & \leq \sum_{i\in\mathbb{N}}\varphi_{j,i}(x)\|\Pi_{\cube'_{j,i}}^{\mathbb{A}}u\|_{\lebe^{\infty}(\cube_{j,i})} \\ & \!\!\!\! \stackrel{\eqref{eq:billiejean}}{\leq} c\,\Big(\sum_{i\in\mathbb{N}}\varphi_{j,i}(x)\Big)\|u\|_{\lebe^{\infty}(\R_{+}^{n})} = c \|u\|_{\lebe^{\infty}(\R_{+}^{n})}. 
\end{split}
\end{align}
By \eqref{eq:traceoperatorWA1}, there exists a non-relabelled subsequence such that 
\begin{align*}
\gamma_{\R^{n-1}}u(x')=\lim_{j\to\infty}\mathcal{T}_{j}^{(2)}u(x',0)\qquad\text{for $\mathscr{H}^{n-1}$-a.e. $x'\in\R^{n-1}$}, 
\end{align*}
whereby \eqref{eq:continuitytraceWA1} and \eqref{eq:minuano} immediately imply that 
\begin{align*}
\|\gamma_{\R^{n-1}}u\|_{(\lebe^{1}\cap\lebe^{\infty})(\R^{n-1})}\leq c\,\Big(\|u\|_{\lebe^{\infty}(\R_{+}^{n})}+\|\mathbb{A}u\|_{\lebe^{1}(\R_{+}^{n})}\Big)
\end{align*}
for all $u\in\homosobo^{\mathbb{A},1}(\R_{+}^{n})\cap\lebe^{\infty}(\R_{+}^{n};\R^{N})$. This completes the proof. 
\end{proof}
\begin{remark}
The construction of the trace operator via the replacement sequence as in \eqref{eq:replacementsequence}\emph{ff}. has some vague resemblance to the atomic approach to traces in Besov- or Triebel-Lizorkin spaces; see, e.g., \cite[\S 3]{Schneider}.
\end{remark}
\begin{remark}
Theorem \ref{thm:operatoradapted} immediately applies mutatis mutandis to functions of bounded $\mathbb{A}$-variation, in formulas $u\in\bv^{\mathbb{A}}(\Omega)$, where $\Omega\subset\R^{n}$ is open and bounded with Lipschitz boundary. This space is defined as the space of those $u\in\lebe^{1}(\Omega;\R^{N})$ for which $\mathbb{A}u$ is a finite $\R^{M}$-valued Radon measure on $\Omega$. 
\end{remark}
\begin{remark}\label{rem:generalizations}
The existence of a bounded linear trace operator $\gamma_{\partial\mathbb{H}}\colon \homosobo^{\mathbb{A},1}(\mathbb{H})\to\lebe^{1}(\partial\mathbb{H};\R^{N})$ for \emph{every} half-space $\mathbb{H}\subset\R^{N}$ is indeed equivalent to the $\mathbb{C}$-ellipticity of $\mathbb{A}$, see \cite{BreitDieningGmeineder,GmeinederRaitaVanSchaftingen2021,GmeinederRaitaVanSchaftingen2024}; an analogous assertion holds on open and bounded domains $\Omega\subset\R^{n}$ with Lipschitz boundaries. The key examples for the necessity of $\mathbb{C}$-ellipticity for such a trace operator rely on the elements of the null spaces of $\mathbb{A}$ which are not  $\frac{n}{n-1}$-integrable; see \cite[Theorem 4.18]{BreitDieningGmeineder} and \cite[Theorem 1.1]{GmeinederRaita19}. It is then natural to inquire as to whether a slight weakening of the underlying ellipticity might imply the existence of a trace operator mapping to $\lebe^{1}(\partial\Omega;\R^{N})$ \emph{if} an a priori higher integrability assumption is made, potentially ruling out the critical singularities of elements of the null spaces. It shall be established in the follow-up paper \cite{ChenEtAl} that this is not possible either, and so the general setting of Theorem \ref{thm:operatoradapted} is optimal indeed.
\end{remark}

\section*{Acknowledgments} 
{\small F.G. is grateful to the Hector Foundation for financial support, Project Number FP 626/21. P.S. acknowledges financial support of his PhD studies at the University of Konstanz by a scholarship of the LGFG (Landesgraduiertenf\"{o}rderungsgesetz) Baden-W\"{u}rttemberg.}

\bibliographystyle{alpha}
\bibliography{DGS}

\end{document}